\newtheorem{theorem}{Theorem}[section]
\newtheorem{lemma}[theorem]{Lemma}
\newtheorem{proposition}[theorem]{Proposition}
\newtheorem{corollary}[theorem]{Corollary}
\theoremstyle{definition}
\newtheorem{definition}[theorem]{Definition}
\newtheorem{example}[theorem]{Example}
\theoremstyle{remark}
\newtheorem{remark}[theorem]{Remark}
\newcommand\sbullet[1][.5]{\mathbin{\vcenter{\hbox{\scalebox{#1}{$\bullet$}}}}}
\newcommand{\id}{\textrm{id}}
\newcommand{\longthanks}[1]{%
	\par\bigskip
	\noindent\textbf{Acknowledgments.}~#1\par
}
\numberwithin{equation}{section}
\begin{document}
\title{On algebro-geometric quotients of torus-invariant subvarieties of the flag variety}


\author{Luis Yair Meza-P\'erez}
\address{Luis Yair Meza-P\'erez, Divisi\'on Acad\'emica de Ciencias B\'asicas, Universidad Ju\'arez Aut\'onoma de Tabasco, Carretera Cunduac\'an-Jalpa de M\'endez KM. 1, Col. La Esmeralda, C.P. 86690, Cunduac\'an, Tabasco, M\'exico}
\curraddr{}
\email{\textcolor{blue}{matematico\_meza@hotmail.com}}
\thanks{The first author was supported by the Consejo Nacional de Ciencia y Tecnolog\'ia (CONACYT) through Mexican scholarship No. 637186. He wishes to thank the Centro de Investigaci\'on en Matem\'aticas, A.C. (CIMAT) for its hospitality and the financial, logistical, and technical support provided during his academic research stays.}

\author{Pedro L. del \'Angel R.}
\address{Pedro L. del \'Angel R., Departamento de Matem\'aticas B\'asicas, Centro de Investigaci\'on en Matem\'aticas, A.C., Jalisco S/N, Col. Valenciana, C.P. 36023, Guanajuato, Gto., M\'exico}
\curraddr{}
\email{\textcolor{blue}{luis@cimat.mx}}
\thanks{}

\author{Carlos Pompeyo-Guti\'errez}
\address{Carlos Pompeyo-Guti\'errez, Divisi\'on Acad\'emica de Ciencias B\'asicas, Universidad Ju\'arez Aut\'onoma de Tabasco, Carretera Cunduac\'an-Jalpa de M\'endez KM. 1, Col. La Esmeralda, C.P. 86690, Cunduac\'an, Tabasco, M\'exico}
\curraddr{}
\email{\textcolor{blue}{carlos.pompeyo@ujat.mx}}
\thanks{}

\author{Miguel Angel Dela-Rosa}
\address{Miguel Angel Dela-Rosa, División Acad\'emica de Ciencias B\'asicas, SECIHTI-UJAT, Carretera Cunduac\'an-Jalpa de M\'endez KM. 1, Col. La Esmeralda, C.P. 86690, Cunduac\'an, Tabasco, M\'exico}
\curraddr{}
\email{\textcolor{blue}{madelarosaca@secihti.mx}}
\thanks{}

\subjclass[2020]{Primary 14M15, 14L30; Secondary 05E14, 14L24, 14N15.}

\keywords{Algebraic Geometry, Matroids, Quotients, $T$-varieties, Geometric Invariant Theory}

\date{\today}

\dedicatory{The first author dedicates this paper to the loving memory of Mat. Rodolfo Conde del \'Aguila, an exceptional teacher, mentor, and friend, whose passion for mathematics inspired me to pursue this beautiful and fascinating discipline. With deep gratitude, a warm embrace to Heaven, Maestro.}

\begin{abstract}
	In this paper, we study the subvarieties of a complex flag variety that are invariant under the action of a maximal torus. Using combinatorial techniques derived from matroid theory, we introduce a decomposition of this variety into affine, locally closed subsets, which we refer to as \textit{thin Schubert cells}, each indexed by an element of a Cartesian product of matroids. We also show that the set of orbits for each of these thin Schubert cells under the action of the torus is, in fact, an orbit space, which gives rise to topologically trivial fiber bundles. As a consequence of this, we prove the existence of algebro-geometric quotients in the sense of Mumford's \textit{Geometric Invariant Theory}. The main result of this work is the existence of a surjective map from the set of geometric quotients of thin Schubert cells to the invariant scheme-theoretic points of a complex flag variety, which allows us to decompose it in terms of such quotients and also define a map from the set of these geometric quotients to the invariant homology of the variety. Finally, we give a complete description of the thin Schubert cells for the special case of the flag variety $\mathds{F}_{1<n-1}(\mathds{C}^n)$ and derive explicit counting formulas.
\end{abstract}

\maketitle

\longthanks{We would like to express our sincere gratitude to \emph{Dr. Cristhian Garay L\'opez,} research professor at the Centro de Investigaci\'on en Matem\'aticas, A.C. (CIMAT), for his insightful discussions during the early stages of the preparation of this paper. We also thank \emph{Professor June E Huh}, from the Department of Mathematics at Princeton University, for his valuable feedback on the first version of this paper. His observations allowed us to provide an additional characterization of thin Schubert cells in the flag variety $\mathds{F}_{1<n-1}(\mathds{C}^n)$ (see \S\ref{sec5}), now in terms of flag matroids.}

\bigskip
\tableofcontents
\bigskip

\section{Introduction}
    \textit{Homogeneous spaces} are nonempty algebraic varieties endowed with a transitive action of an algebraic group (reductive, semisimple). In simple terms, they can be thought of as spaces that appear identical regardless of how one moves through them via the group action. More precisely, any such space is isomorphic to a quotient of the form $G/P$, where $G$ is an algebraic group and $P$ is a closed subgroup. Ehresmann \cite{Ehr34} provides a rigorous and foundational description of these spaces in terms of transitive actions of Lie groups and shows that the Grassmannian $G(d,n)$ can be realized as the quotient $\mathrm{GL}_n(\mathds{C})/P$ for a suitable parabolic subgroup $P$. He also establishes a cellular decomposition of $G(d,n)$ into subsets known as \textit{Schubert cells}, thereby generalizing the ideas previously introduced by Hermann Schubert in 1879. Subsequently, Monk \cite{Mon59} extends some of Ehresmann's results to complete flag varieties and conducts a thorough study of their topological and algebro-geometric properties. Furthermore, he gives a detailed description of the celullar structure of these varieties in terms of Schubert cells. Building upon these foundational developments, Gelfand et al. \cite{Gel-et87} provide three equivalent stratifications of the Grassmannian; the first one uses matroid techniques, and the third one, Schubert cells. In a subsequent work, Gel'fand and Serganova \cite{GS87} generalize this approach to arbitrary homogeneous spaces $G/P$, {describing} it in what they define as \textit{thin \textup{[}Schubert\textup{]} cells}. One can identify a complex flag variety $X$ with the quotient $\mathrm{GL}_n(\mathds{C})/P$ and explicitly describe the parabolic subgroup $P$ as the isotropy group of a fixed flag. It is possible to show that the cohomology classes of the Schubert varieties in $X$ form an additive basis of the cohomology ring $H^*(X)$ (see Brion \cite{Bri05}).
    
    Our first goal is to generalize to complex flag varieties two results that Elizondo-Fink-Garay \cite{EFG25} obtain for the complex Grassmannian $G(d,n)$ using combinatorial methods, namely: \textit{\textbf{\textup{(}1\textup{)}}} realize the thin Schubert cell $G_M$ as an affine and locally closed subset (see \S2.2), and \textit{\textbf{\textup{(}2\textup{)}}} obtain a decomposition of $G(d,n)$ as a disjoint union of these thin cells (Proposition 2.4). For this, observe that there exists a natural action of the maximal algebraic torus $T\cong(\mathds{C}^*)^n$ on the Cartesian product of Grassmannians $G(d_1,n)\times\cdots\times G(d_k,n)$ under which $\mathds{F}_{d_1<\cdots<d_k}(\mathds{C}^n)$ is a $T$-invariant subset. This enables us to construct, in Lemma \ref{fmaffine}, the analogue of a thin Schubert cell on flag varieties, and to prove that it is an affine and locally closed set, and consequently to establish, in Proposition \ref{fdecomp}, a decomposition of the flag variety as a disjoint union of such cells\footnote{Our description of thin Schubert cells coincides with flag matroids in the case of the variety $\mathds{F}_{1<n-1}(\mathds{C}^n)$, but the characterization we provide allows us to determine quickly that such thin cells are affine schemes.}. The above leads us to the main results of this work:
    
        \begin{itemize} 
        	\item In Theorem \ref{fm-ugq}, we demonstrate that the orbit space of a thin Schubert cell under the action of a subtorus is a universal geometric quotient, and in particular, a universal categorical quotient.
        	\item In Theorem \ref{subvar-ugq}, we prove that the intersection of a $T$-invariant subvariety with a thin cell, modulo the action of a subtorus, admits a universal geometric quotient and, hence, a universal categorical quotient.
        	\item In Theorem \ref{mainthm}, we provide a surjective map from the disjoint union of the universal geometric quotients to $\mathds{F}_{d_1<\cdots<d_k}(\mathds{C}^n)^T$, which denotes the scheme-theoretic set of $T$-invariant points of $\mathds{F}_{d_1<\cdots<d_k}(\mathds{C}^n)$.
        \end{itemize}		    
    
    As a consequence of Theorem \ref{mainthm}, we establish in Corollary \ref{vmdecomp} a decomposition of scheme-theoretic points of $\mathds{F}_{d_1<\cdots<d_k}(\mathds{C}^n)$ in terms of the disjoint union of the images under the surjective map, and in Corollary \ref{hom-map} we further provide a map from the disjoint union of the thin cells to the $T$-invariant homology.
    
    Finally, in Proposition \ref{charthincells} we classify the thin Schubert cells of the flag variety $\mathds{F}_{1<n-1}(\mathds{C}^n)$ for $n\geq 3$, establish a purely combinatorial criterion characterizing the nonempty ones, and consequently obtain a complete description of the remaining cases. We then determine the dimension of each class and compute the number of thin Schubert cells contained in each.

\section{Preliminaries}\label{sec2}
    First, we introduce some conventions and preliminaries from Combinatorics and Grassmannians.\\
    
    \noindent\textbf{Conventions.} We will consider a strictly increasing sequence $0<d_1<\cdots<d_k<n$ of integers; $[n]$ will denote the set $\{1,\ldots,n\}$ with its usual order, and $\displaystyle{[n]\choose d_\ell}$ will denote the set of all subsets of cardinality $d_\ell$  of $[n]$ with the following partial order: if $I=(i_1,\ldots,i_{d_\ell}),J=(j_1,\ldots,j_{d_\ell})\in\displaystyle{[n]\choose d_\ell}$ where $i_1<\cdots<i_{d_\ell}$ and $j_1<\cdots<j_{d_\ell}$, we will say that $I\leq J$ if and only if $i_s\leq j_s$ for all $1\leq s\leq d_\ell$. Finally, $\displaystyle{[n]\choose d_1,\ldots,d_k}$ will denote the Cartesian product $\displaystyle{[n]\choose d_1}\times\cdots\times\displaystyle{[n]\choose d_k}$, and $\mathbf{W},\mathbf{I}$ will denote the $k$-tuples $(W_1,\ldots,W_k)$ and $(I_1,\ldots,I_k)$ respectively.
    
    A \textbf{decomposition} of a topological space $X$ is a family $\{X_i\}_{i\in I}$ of pairwise disjoint and locally closed subsets of $X$, satisfying $X=\bigsqcup_{i\in I}X_i$.
    
\subsection{Matroids and the Grassmannian variety $G(d,n)$}\label{subsec2.1}
    Matroids are combinatorial objects useful for capturing the discrete linear algebraic information of an element $W_{\sbullet}\in\mathds{F}_{d_1<\cdots<d_k}(\mathds{C}^n)$, and they serve as the indexing objects for the decomposition of the flag variety into thin cells, as stated in Proposition \ref{fdecomp}, which is fundamental to this work. In this section, we introduce them by extending the notation provided in \cite[\S2.1, p. 3]{EFG25}, which will serve as our standard reference for the purposes of this paper.

    A \textbf{matroid} of \textbf{rank} $d$, on $n$ elements is a pair $M=([n],\mathcal{B})$, where $\mathcal{B}\subset{[n]\choose d}$ is a nonempty family which satisfies the following exchange property:
        \begin{equation}\label{exchange}
        	\mbox{for every\ } I,J\in \mathcal{B} \mbox{\ and $i\in I\setminus J$, there exists\ } j\in J\setminus I\ \mbox{such that}\ (I\setminus\{i\})\cup\{j\}\in \mathcal{B}.
        \end{equation}
            
    The members of $\mathcal{B}$ are the \textbf{bases} of $M$. We denote by $\EuScript{M}^d_n$ the set of matroids of rank $d$ on $n$ elements, and we define $\EuScript{M}^{d_1,\ldots,d_k}_n := \EuScript{M}^{d_1}_n \times\cdots\times \EuScript{M}^{d_k}_n$ as their Cartesian product, whose elements we refer to as \textbf{plurimatroids}.
    
    The subgroup $T<\mathrm{GL}_n(\mathds{C})$ of invertible diagonal matrices with entries {\tiny }in $\mathds{C}$ is a \emph{maximal torus} acting on the Grassmannian $G(d,n)$ as follows.
    
    \begin{definition}
    	The map
    	    \begin{equation}\label{tactiongrass}
    	    	\begin{array}{cccc}
    	    		\sbullet:&G(d,n)\times T&\longrightarrow&G(d,n)\\
    	    		&(W,g)&\longmapsto&W\sbullet g:=Wg
    	    	\end{array}
    	    \end{equation}
    	is a right action of $T$ on $G(d,n)$.
    \end{definition}
    
    Since the Pl\"ucker embedding\footnote{By definition, the ambient projective space has dimension ${n \choose d}-1=\frac{n!}{d!(n-d)!}-1$.}
    
        \begin{equation*}
        	\psi:G(d,n)\longhookrightarrow\mathds{P}_{\mathds{C}}^{{n\choose d}-1}
        \end{equation*} 
    is a closed immersion, we can define an action of $T$ on $\mathds{P}_{\mathds{C}}^{{n\choose d}-1}$ given by
        \begin{equation}\label{tactionpro}
        	\begin{array}{cccc}
        		\sbullet:&\mathds{P}_{\mathds{C}}^{{n\choose d}-1}\times T&\longrightarrow&\mathds{P}_{\mathds{C}}^{{n\choose d}-1}\\
        		&\left((p_I)_{I\in{[n]\choose d}},g\right)&\longmapsto&(p_I)_{I\in{[n]\choose d}}\sbullet g :=(p_It_I)_{I\in{[n]\choose d}}
        	\end{array},
        \end{equation}
    where $t_I$ denotes the product of the elements on the main diagonal of the columns of $g$ indexed by $I$, and we have the $T$-equivariant diagram
        \begin{equation}\label{grassprodiag}
        	\begin{tikzcd}
        		G(d,n)\times T  \arrow{r}{\sbullet} \arrow{d}[swap]{\psi\times\id}
        		&G(d,n) \arrow[hook]{d}{\psi}\\
        		\mathds{P}_{\mathds{C}}^{{n\choose d}-1}\times T \arrow{r}{\sbullet} &\mathds{P}_{\mathds{C}}^{{n\choose d}-1} 
        	\end{tikzcd}
        \end{equation}
    
    We can define a left action of the symmetric group $\mathfrak{S}_n$ on $G(d,n)$ given by
        \begin{equation}\label{snactiongrass}
        	\begin{array}{cccc}
        		\sbullet:&\mathfrak{S}_n\times G(d,n)&\longrightarrow&G(d,n)\\
        		&(\sigma,W)&\longmapsto&\sigma\sbullet W:=WP_{\sigma}
        	\end{array},
        \end{equation}
    where $P_\sigma$ is the permutation matrix associated to $\sigma$; i.e., if $P_{\sigma}=(\rho_{ij})$, then $\rho_{ij}=1$ if $j=\sigma(i)$ and $\rho_{ij}=0$ otherwise. Likewise, we have a left action of $\mathfrak{S}_n$ on $\mathds{P}^{{n\choose d}-1}_{\mathds{C}}$ given by
        \begin{equation}\label{snactionproj}
        	\begin{array}{cccc}
        		\sbullet:&\mathfrak{S}_n\times\mathds{P}^{{n\choose d}-1}_{\mathds{C}}&\longrightarrow&\mathds{P}^{{n\choose d}-1}_{\mathds{C}}\\
        		&\left(\sigma,(p_I)_{I\in{[n]\choose d}}\right)&\longmapsto&\sigma\sbullet(p_I)_{I\in{[n]\choose d}}:=\left(\varepsilon_{\sigma,I}p_{\sigma^{-1}(I)}\right)_{I\in{[n]\choose d}}
        	\end{array},
        \end{equation}
    where
        \begin{equation*}
        	\varepsilon_{\sigma,I}=\begin{cases*}
        		\ \ 1, & if the number of inversions required to rearrange\\ 
        		&the set $\sigma(I)$ in increasing order is even\\
        		-1, & otherwise
        	\end{cases*}.
        \end{equation*}
        
    We proceed to show that \eqref{snactionproj} satisfies the identity and compatibility axioms of an action:
    	\begin{enumerate}
    		\item $\sigma\sbullet\left(\tau\sbullet(p_I)_{I\in{[n]\choose d}}\right)=(\sigma\tau)\sbullet(p_I)_{I\in{[n]\choose d}}$, $\forall\sigma,\tau\in\mathfrak{S}_n$, $\forall(p_I)_{I\in{[n]\choose d}}\in\mathds{P}^{{n\choose d}-1}_{\mathds{C}}$.
    		    \begin{eqnarray*}
    		    	\sigma\sbullet\left(\tau\sbullet(p_I)_{I\in{[n]\choose d}}\right)&=&\sigma\sbullet(\varepsilon_{\tau,I}p_{\tau^{-1}(I)})_{I\in{[n]\choose d}} \\
    		    	&=&\left(\varepsilon_{\sigma,I}\varepsilon_{\tau,I}p_{\tau^{-1}\big(\sigma^{-1}(I)\big)}\right)_{I\in{[n]\choose d}}\\
    		    	&=&\left(\varepsilon_{\sigma\tau,I}p_{(\sigma\tau)^{-1}(I)}\right)_{I\in{[n]\choose d}}\\
    		    	&=&(\sigma\tau)\sbullet(p_I)_{I\in{[n]\choose d}}.
    		    \end{eqnarray*}
    		\item $\id\sbullet(p_I)_{I\in{[n]\choose d}}=(p_I)_{I\in{[n]\choose d}}$, $\forall(p_I)_{I\in{[n]\choose d}}\in\mathds{P}^{{n\choose d}-1}_{\mathds{C}}$.
    		    \begin{equation*}
    		    	\id\sbullet(p_I)_{I\in{[n]\choose d}}=\left(\varepsilon_{\id,I}p_{\id^{-1}(I)}\right)_{I\in{[n]\choose d}}=\left(p_{\id(I)}\right)_{I\in{[n]\choose d}}=(p_I)_{I\in{[n]\choose d}}.
    		    \end{equation*}
    	\end{enumerate}
    
    \begin{proposition}\label{snequivprop}
    	The Plücker embedding $\psi$ is $\mathfrak{S}_n$-equivariant, i.e., the following diagram
    	    \begin{equation}\label{snequivdiag}
    	    	\begin{tikzcd}
    	    		\mathfrak{S}_n\times G(d,n)\arrow{r}{\sbullet}\arrow{d}[swap]{\operatorname{id}\times\,\psi}
    	    		&G(d,n) \arrow[hook]{d}{\psi}\\
    	    		\mathfrak{S}_n\times\mathds{P}_{\mathds{C}}^{{n\choose d}-1}\arrow{r}{\sbullet} &\mathds{P}_{\mathds{C}}^{{n\choose d}-1} 
    	    	\end{tikzcd}
    	    \end{equation}
    	commutes.
    \end{proposition}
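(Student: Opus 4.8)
The plan is to verify commutativity of the square \eqref{snequivdiag} one point at a time: fix $\sigma\in\mathfrak{S}_n$ and $W\in G(d,n)$ and show that $\psi(\sigma\sbullet W)=\sigma\sbullet\psi(W)$ as points of $\mathds{P}^{{n\choose d}-1}_{\mathds{C}}$, which amounts to comparing, coordinate by coordinate over $I\in{[n]\choose d}$, the two sides. I would pick a $d\times n$ matrix $A$ of rank $d$ representing $W$, so that $\psi(W)=(p_I)_{I}$ with $p_I=p_I(A)$ the maximal minor of $A$ on the columns indexed by $I$; since $P_\sigma\in\mathrm{GL}_n(\mathds{C})$, the product $AP_\sigma$ again has rank $d$ and represents $\sigma\sbullet W=WP_\sigma$, so the left action \eqref{snactiongrass} is well defined and the comparison makes sense.

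The computational heart is the remark that right multiplication by $P_\sigma$ merely permutes columns: from the rule $\rho_{ij}=1$ exactly when $j=\sigma(i)$ one reads off that the $j$-th column of $AP_\sigma$ is the $\sigma^{-1}(j)$-th column of $A$. Therefore, for $I=(i_1<\cdots<i_d)$, the $I$-th Pl\"ucker coordinate of $WP_\sigma$ is the determinant of the submatrix of $A$ with columns $\sigma^{-1}(i_1),\dots,\sigma^{-1}(i_d)$ in that order; sorting these $d$ columns into increasing order --- i.e. reindexing by the set $\sigma^{-1}(I)$ --- and using that the determinant is alternating in its columns multiplies the value by the sign of the permutation that sorts the tuple $\bigl(\sigma^{-1}(i_1),\dots,\sigma^{-1}(i_d)\bigr)$. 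One then checks that this sign is precisely the factor $\varepsilon_{\sigma,I}$ appearing in \eqref{snactionproj}, so that $p_I(WP_\sigma)=\varepsilon_{\sigma,I}\,p_{\sigma^{-1}(I)}(A)$; by \eqref{snactionproj} the right-hand side is the $I$-th coordinate of $\sigma\sbullet\psi(W)$, and as $I$ was arbitrary the diagram commutes. I would add that the argument is independent of the chosen representative: replacing $A$ by $gA$ with $g\in\mathrm{GL}_d(\mathds{C})$ rescales every minor by $\det g$ and hence fixes the point of $\mathds{P}^{{n\choose d}-1}_{\mathds{C}}$. Conceptually, the statement is just the functoriality of $\textstyle\bigwedge^d$ applied to the linear automorphism $v\mapsto vP_\sigma$ of $\mathds{C}^n$, after one identifies the induced signed-permutation action of $\textstyle\bigwedge^d P_\sigma$ on the basis $\{e_I\}$ of $\textstyle\bigwedge^d\mathds{C}^n$ with the formula \eqref{snactionproj}.

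I expect the only genuinely delicate step to be this sign bookkeeping --- matching the reordering sign with $\varepsilon_{\sigma,I}$ as defined through the inversions of $\sigma(I)$. Everything else (the columns of $AP_\sigma$, the multilinearity and antisymmetry of the determinant, the preservation of rank, and the independence of the matrix representative) is routine, so the proof should come down to this single careful unwinding of the definition of $\varepsilon_{\sigma,I}$.
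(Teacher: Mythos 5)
Your argument is correct and follows essentially the same route as the paper's own proof: fix a matrix representative of $W$, observe that right-multiplication by $P_\sigma$ with $\rho_{ij}=\delta_{j,\sigma(i)}$ sends the $j$-th column to the $\sigma^{-1}(j)$-th column of the original matrix, and then identify the $I$-th Pl\"ucker coordinate of $WP_\sigma$ with $\varepsilon_{\sigma,I}\,p_{\sigma^{-1}(I)}$ after sorting the selected columns. The paper compresses this into a single chain of equalities (passing from $\psi\big((w_{i\sigma^{-1}(j)})\big)$ directly to $(\varepsilon_{\sigma,I}p_{\sigma^{-1}(I)})_I$); you have merely spelled out the sign bookkeeping and the well-definedness under change of representative, which the paper leaves implicit.
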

    
    \begin{proof}
    	Let $\sigma\in \mathfrak{S}_n$ and $W=(w_{ij})\in G(d,n)$ arbitrary. Then
    	    \begin{equation*}
    	    	\psi(\sigma\sbullet W)=\psi(WP_{\sigma})=\psi\Big(\!\!\left(w_{i\sigma^{-1}(j)}\right)\!\!\Big)=\left(\varepsilon_{\sigma,I}p_{\sigma^{-1}(I)}\right)_{I\in{[n]\choose d}}=\sigma\sbullet\psi(W).
    	    \end{equation*}
    \end{proof}
    
    Although the following result was not stated explicitly by Elizondo et al., it can be deduced from their arguments in \cite[\S2.2, p. 5]{EFG25}. We will use it in the proof of Lemma \ref{fmaffine}, where we extend it to the analogous construction of the thin Schubert cells in the setting of complex flag varieties.
    
    \begin{proposition}\label{gmaffine}
    	Let $M=([n],\mathcal{B})\in\EuScript{M}^d_n$ be a matroid. The \textbf{thin Schubert cell}
    	    \begin{equation}\label{gm}
    	    	G_M:=\left\{(p_I)_{I\in{[n]\choose d}}\in G(d,n)\mid p_I\neq0\Leftrightarrow I\in\mathcal{B}\right\},
    	    \end{equation}
    	where $(p_I)_{I\in{[n]\choose d}}$ are the Pl\"ucker coordinates is affine and a locally closed set in $G(d,n)$.
    \end{proposition}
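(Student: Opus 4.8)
The plan is as follows: local closedness will be immediate from the defining conditions of $G_M$ read as Pl\"ucker (in)equalities, while for affineness I would confine $G_M$ to a single standard affine chart of $G(d,n)$, where it appears as a principal open subset of a closed subscheme and is therefore affine.

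First I would dispose of local closedness. For each $I\in{[n]\choose d}$ the set $H_I:=\{p_I=0\}$ is a well-defined closed subset of $G(d,n)\subset\mathds{P}_{\mathds{C}}^{{n\choose d}-1}$ (the trace of a coordinate hyperplane), so its complement is open, and by the very definition of $G_M$,
$$G_M=\Bigl(\bigcap_{I\notin B_M}H_I\Bigr)\cap\Bigl(\bigcap_{I\in B_M}\bigl(G(d,n)\setminus H_I\bigr)\Bigr),$$
an intersection of a closed set with an open set, hence locally closed in $G(d,n)$.

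For affineness I would use that $B_M\neq\varnothing$ (part of the matroid axioms) to fix some $I_0\in B_M$ and work in the standard Pl\"ucker chart $U_{I_0}=\{p_{I_0}\neq0\}\subset G(d,n)$, which is isomorphic to an affine space of dimension $d(n-d)$ and on which every Pl\"ucker coordinate $p_I$ becomes a regular function once we normalize $p_{I_0}\equiv1$. Since $I_0\in B_M$, every point of $G_M$ has $p_{I_0}\neq0$, so $G_M\subseteq U_{I_0}$ and $G_M=G_M\cap U_{I_0}$. Inside $U_{I_0}$ let $Z$ be the closed subscheme cut out by the regular functions $\{\,p_I : I\in{[n]\choose d}\setminus B_M\,\}$; being closed in the affine scheme $U_{I_0}$, it is affine. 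Then I would observe that
$$G_M=\{x\in Z : p_I(x)\neq0\ \text{for all}\ I\in B_M\}=Z\setminus V\!\Bigl(\textstyle\prod_{I\in B_M}p_I\Bigr),$$
i.e. $G_M$ is the principal open subset of the affine scheme $Z$ attached to $f:=\prod_{I\in B_M}p_I$; such a principal open is affine, with coordinate ring the localization $\mathds{C}[Z]_f$, again a finitely generated $\mathds{C}$-algebra. (If $B_M$ is non-realizable the construction degenerates to $G_M=\varnothing$, which is trivially affine and locally closed.)

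The argument is essentially formal, so I do not anticipate a genuine obstacle; the one point that must be handled with care is the reduction to a single chart — it is precisely the choice $I_0\in B_M$ that both forces $G_M$ into $U_{I_0}$ and makes $\prod_{I\in B_M}p_I$ regular there — together with the standard fact that a principal open subset of an affine scheme is affine. Since this is exactly the reasoning needed for the flag-variety analogue, I would phrase it so that it transfers to Lemma \ref{fmaffine} with minimal change.
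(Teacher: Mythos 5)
Your proof is correct. The paper itself does not supply an argument for Proposition~\ref{gmaffine} -- it simply defers to the arguments in \cite{EFG25}, Subsection~\S2.2 -- but your argument is the expected, standard one: local closedness comes from writing $G_M$ as the intersection of the closed set $\bigcap_{I\notin B_M}\{p_I=0\}$ with the open set $\bigcap_{I\in B_M}\{p_I\neq 0\}$, and affineness follows from choosing $I_0\in B_M$ (possible since $B_M\neq\emptyset$), observing that $G_M$ lies entirely in the affine chart $U_{I_0}$, and realizing $G_M$ there as the principal open subset $Z_f$ of the closed affine subvariety $Z=\{p_I=0 : I\notin B_M\}\subset U_{I_0}$ with $f=\prod_{I\in B_M}p_I$. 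This is exactly the reduction that carries over to Lemma~\ref{fmaffine} with minimal change, as you observe.
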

    
    If $M$ is a matroid of rank $d$ on $n$ elements whose bases satisfy the exchange property, then $\sigma(M)$ also satisfies it for every $\sigma\in\mathfrak{S}_n$. Formally, we have the following:
    
    \begin{remark}
    	Let $M=([n],\mathcal{B})\in\EuScript{M}^d_n$ and $\sigma\in\mathfrak{S}_n$; then $\sigma(M):=\big([n], \sigma(\mathcal{B})\big)\in\EuScript{M}^d_n$, where $\sigma(\mathcal{B}):=\{\sigma(I)\mid I\in\mathcal{B}\}$.
    \end{remark}
    
    \begin{proof}
    	Indeed, let $\sigma\in \mathfrak{S}_n$ and $I,J\in\mathcal{B}$; then $\#\sigma(I)=\#I$ since $\sigma$ is bijective. On the other hand, we know that for all $i\in I\setminus J$ exists $j\in J\setminus I$ such that $(I\setminus\{i\})\cup\{j\}\in\mathcal{B}$, therefore, $\sigma(i)\in\sigma(I)\setminus\sigma(J)$ and $\sigma(j)\in\sigma(J)\setminus\sigma(I)$. Consequently, $\sigma\big((I\setminus\{i\})\cup\{j\}\big)\in\sigma(\mathcal{B})$ and hence $\big(\sigma(I)\setminus\{\sigma(i)\}\big)\cup\{\sigma(j)\}\in\sigma(\mathcal{B})$.
    \end{proof}
    
    \begin{proposition}\label{gmisoprop}
    	Let $\sigma\in\mathfrak{S}_n$ be an arbitrary permutation. Then $\sigma(G_M):=G_{\sigma(M)}$ is a thin Schubert cell if and only if $G_M$ is a thin Schubert cell. Moreover, there exists an isomorphism $G_M\cong G_{\sigma(M)}$.
    \end{proposition}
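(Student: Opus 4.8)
The plan is to exploit the $\mathfrak{S}_n$-equivariance of the Pl\"ucker embedding (Proposition \ref{snequivprop}) together with the remark that $\sigma(M)$ is again a matroid of rank $d$ on $n$ elements. The first step is to show that the $\mathfrak{S}_n$-action on $\mathds{P}_{\mathds{C}}^{{n\choose d}-1}$ given by \eqref{snactionproj} permutes the thin Schubert cells: concretely, I claim $\sigma\sbullet G_M = G_{\sigma(M)}$ as subsets of $G(d,n)$. To see this, take a point with Pl\"ucker coordinates $(p_I)_{I\in{[n]\choose d}}\in G_M$, so that $p_I\neq 0 \Leftrightarrow I\in B_M$. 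By \eqref{snactionproj} its image under $\sigma$ has $I$-th coordinate $\varepsilon_{\sigma,I}\,p_{\sigma^{-1}(I)}$, which is nonzero if and only if $p_{\sigma^{-1}(I)}\neq 0$ (the sign $\varepsilon_{\sigma,I}$ is $\pm 1$), i.e. if and only if $\sigma^{-1}(I)\in B_M$, i.e. if and only if $I\in\sigma(B_M)=B_{\sigma(M)}$. Hence $\sigma\sbullet(p_I)_{I}\in G_{\sigma(M)}$, and running the same computation with $\sigma^{-1}$ gives the reverse inclusion, so $\sigma\sbullet G_M = G_{\sigma(M)}$.

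Next, by Proposition \ref{snequivprop} the diagram \eqref{snequivdiag} commutes, so the action map $W\mapsto \sigma\sbullet W = WP_\sigma$ on $G(d,n)$ is compatible with the action on projective space; in particular $\psi(\sigma\sbullet W) = \sigma\sbullet\psi(W)$. Therefore the restriction of $W\mapsto WP_\sigma$ to $G_M$ has image exactly $G_{\sigma(M)}$ (by the previous paragraph, applied to Pl\"ucker coordinates), and the restriction of $W\mapsto WP_{\sigma^{-1}} = WP_\sigma^{-1}$ to $G_{\sigma(M)}$ is a two-sided inverse, since $P_\sigma P_{\sigma^{-1}} = P_{\sigma\sigma^{-1}} = P_{\id} = \id$. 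Both maps are restrictions of morphisms of varieties (right multiplication by a fixed matrix is algebraic, and descends to $G(d,n)$), hence they are morphisms on the locally closed subsets $G_M$ and $G_{\sigma(M)}$; being mutually inverse, they furnish the desired isomorphism $G_M\cong G_{\sigma(M)}$.

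For the first assertion of the proposition --- that $\sigma(G_M)$ is a thin Schubert cell if and only if $G_M$ is --- I would argue that $\sigma(G_M)$ coincides with $G_{\sigma(M)}$, and $\sigma(M)\in\EuScript{M}^d_n$ by the preceding Remark, so $G_{\sigma(M)}$ is by definition the thin Schubert cell associated with the matroid $\sigma(M)$; conversely, if $\sigma(G_M)=G_N$ for some matroid $N\in\EuScript{M}^d_n$, then applying $\sigma^{-1}$ and the established equality gives $G_M = \sigma^{-1}(G_N) = G_{\sigma^{-1}(N)}$ with $\sigma^{-1}(N)\in\EuScript{M}^d_n$, so $G_M$ is a thin Schubert cell. (One should note the mild subtlety that "being a thin Schubert cell" presupposes that the indexing object is a genuine matroid, i.e. that $B_M$ is nonempty and satisfies \eqref{exchange}; this is exactly what the Remark guarantees is preserved under $\sigma$.)

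I expect the main --- though still modest --- obstacle to be bookkeeping of the sign $\varepsilon_{\sigma,I}$ and of the inverse permutation: one must be careful that the action \eqref{snactionproj} sends the $\sigma^{-1}(I)$-coordinate to the $I$-coordinate (not the other way around), so that the support of the Pl\"ucker vector transforms by $\sigma$ rather than $\sigma^{-1}$, matching $\sigma(B_M)$. Once the combinatorics of supports is pinned down, the signs are irrelevant for the vanishing/non-vanishing pattern, and the algebraicity of the maps is immediate since they are restrictions of the linear automorphisms $P_\sigma$ of $\mathds{C}^n$ acting on $G(d,n)$.
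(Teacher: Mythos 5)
Your proposal is correct and follows essentially the same route as the paper: the central computation that the support of the Pl\"ucker vector transforms by $\sigma$ (the sign $\varepsilon_{\sigma,I}=\pm1$ being irrelevant to vanishing) is precisely the chain of equivalences in the paper's proof, and the isomorphism is the same map $\varphi_\sigma=\sigma\sbullet(-)$ with inverse $\varphi_{\sigma^{-1}}$. You supply a bit more scaffolding than the paper does---invoking the $\mathfrak{S}_n$-equivariance of $\psi$ to identify the Pl\"ucker action with right multiplication by $P_\sigma$, and spelling out why the map is algebraic and why the ``if and only if'' clause reduces to the Remark that $\sigma(M)\in\EuScript{M}^d_n$---but these are elaborations of the same argument rather than a different one.
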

    
    \begin{proof}
    	Note that
    	    \begin{eqnarray*}
    	    	\sigma\sbullet(p_I)_{I\in{[n]\choose d}}\in G_{\sigma(M)}&\Longleftrightarrow&\left(\varepsilon_{\sigma,I}p_{\sigma^{-1}(I)}\right)_{I\in{[n]\choose d}}\in G_{\sigma(M)}\\
    	    	&\Longleftrightarrow& \left(p_{\sigma^{-1}(I)}\neq0\Leftrightarrow I\in\sigma(\mathcal{B})\right)\\
    	    	&\Longleftrightarrow&\left(p_{\sigma^{-1}(I)}\neq0\Leftrightarrow\sigma^{-1}(I)\in\mathcal{B}\right)\\
    	    	&\Longleftrightarrow&\left(p_I\neq0\Leftrightarrow I\in\mathcal{B}\right)\\
    	    	&\Longleftrightarrow&(p_I)_{I\in{[n]\choose d}}\in G_M.
    	    \end{eqnarray*}
    	The morphism
    	    \begin{equation}\label{gmisomorphism}
    	    	\begin{array}{cccc}
    	    		\varphi_{\sigma}:&G_M&\longrightarrow&G_{\sigma(M)}\\
    	    		&(p_I)_{I\in{[n]\choose d}}&\longmapsto&\varphi_{\sigma}\left((p_I)_{I\in{[n]\choose d}}\right):=\sigma\sbullet(p_I)_{I\in{[n]\choose d}}
    	    	\end{array}
    	    \end{equation}
    	is the desired isomorphism, whose inverse is given by $\varphi_{\sigma}^{-1}:=\varphi_{\sigma^{-1}}$.
    \end{proof}
    
\section{Thin Schubert cells and matroidal decompositions on flag varieties}\label{sec3}
    In this section we introduce the analogue of thin Schubert cells in a complex flag variety and establish a decomposition of this variety in terms of these cells. To this end, we consider the flag variety
        \begin{equation*}
        	\mathds{F}_{d_1<\cdots<d_k}(\mathds{C}^n):=\{W_1\subset\cdots\subset W_k\subset\mathds{C}^n\mid \mathrm{dim}\,W_i=d_i,\ \forall i=1,\ldots,k\}\subset\prod_{i=1}^kG(d_i,n).
        \end{equation*}
        
    The actions of the torus $T$ on Grassmannians and projective spaces, as introduced in \S\ref{subsec2.1} and defined in \eqref{tactiongrass} and \eqref{tactionpro}, respectively, extend naturally to Cartesian products of such varieties. Consequently, the algebro-geometric structure induced by the $T$-actions is preserved under these extensions. We now state this result formally.
    
    \begin{proposition}\label{tequivariance}
    	By convention, let $(\mathbf{p_I})_{\mathbf{I}\in{[n]\choose d_1,\ldots,d_k}} :=\left((p_{I_1})_{I_1\in{[n]\choose d_1}},\ldots,(p_{I_k})_{I_k\in{[n]\choose d_k}}\right)$ and consider the actions of $T$ given by
    	    \begin{equation}
    	    	\begin{array}{cccc}\label{tactioncargrass}
    	    		\sbullet:&\displaystyle\prod_{i=1}^k G(d_i,n)\times T&\longrightarrow&\displaystyle\prod_{i=1}^kG(d_i,n)\\
    	    		&(\mathbf{W},g)&\longmapsto&\mathbf{W}\sbullet g:=(W_1g,\ldots,W_kg)
    	    	\end{array}
    	    \end{equation}
    	and
    	    \begin{equation}
    	    	\begin{array}{cccc}\label{tactioncarpro}
    	    		\sbullet:&\displaystyle\prod_{i=1}^k\mathds{P}_{\mathds{C}}^{{n\choose d_i}-1}\times T&\longrightarrow&\displaystyle\prod_{i=1}^k\mathds{P}_{\mathds{C}}^{{n\choose d_i}-1}\\
    	    		&\left((\mathbf{p_I})_{\mathbf{I}\in{[n]\choose d_1,\ldots,d_k}},g\right)&\longmapsto&(\mathbf{p_I})_{\mathbf{I}\in{[n]\choose d_1,\ldots,d_k}}\sbullet g :=\left((p_{I_1}t_{I_1})_{I_1\in{[n]\choose d_1}},\ldots,(p_{I_k}t_{I_k})_{I_k\in{[n]\choose d_k}}\right)
    	    	\end{array}.
    	    \end{equation}
    	Then the map
    	    \begin{equation}\label{genpluck}
    	    	\begin{array}{cccc}
    	    		\Psi:&\displaystyle\prod_{i=1}^kG(d_i,n)&\longhookrightarrow&\displaystyle\prod_{i=1}^k\mathds{P}_{\mathds{C}}^{{n\choose d_i}-1}\\
    	    		&\mathbf{W}&\longmapsto&\big(\psi_1(W_1),\ldots,\psi_k(W_k)\big)
    	    	\end{array}
    	    \end{equation}
    	where $\Psi:=\psi_1\times\cdots\times\psi_k$ is $T$-equivariant; i.e., the diagram
    	    \begin{equation}\label{cargrassprodiag}
    	    	\begin{tikzcd}
    	    		\displaystyle\prod_{i=1}^kG(d_i,n)\times T  \arrow{r}{\sbullet} \arrow[hook]{d}[swap]{\Psi\times\operatorname{id}\,}
    	    		&\displaystyle\prod_{i=1}^kG(d_i,n) \arrow[hook]{d}{\Psi}\\
    	    		\displaystyle\prod_{i=1}^k\mathds{P}_{\mathds{C}}^{{n\choose d_i}-1}\times T \arrow{r}{\sbullet} &\displaystyle\prod_{i=1}^k\mathds{P}_{\mathds{C}}^{{n\choose d_i}-1} 
    	    	\end{tikzcd}
    	    \end{equation}
    	commutes.
    \end{proposition}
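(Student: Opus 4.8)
The plan is to reduce the statement to the single-factor case already handled in diagram~\eqref{grassprodiag}, exploiting the fact that every construction here is built componentwise from the Grassmannian/projective-space picture. First I would recall that the generalized Pl\"ucker map $\Psi=\psi_1\times\cdots\times\psi_k$ is a product of closed immersions, hence itself a closed immersion, so it makes sense to speak of $T$-equivariance on its image. The key observation is that the $T$-action \eqref{tactioncargrass} on $\prod_i G(d_i,n)$ is simply the diagonal-in-the-$T$-slot product of the actions \eqref{tactiongrass}, and likewise \eqref{tactioncarpro} is the product of the actions \eqref{tactionpro}; the \emph{same} group element $g\in T$ is fed to each factor.

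The proof then proceeds by a direct chase of diagram~\eqref{cargrassprodiag}, working coordinatewise. Fix $\mathbf{W}=(W_1,\ldots,W_k)\in\prod_i G(d_i,n)$ and $g\in T$. Going right-then-down gives
\begin{equation*}
	\Psi(\mathbf{W}\sbullet g)=\Psi(W_1g,\ldots,W_kg)=\big(\psi_1(W_1g),\ldots,\psi_k(W_kg)\big),
\end{equation*}
while going down-then-right gives
\begin{equation*}
	\Psi(\mathbf{W})\sbullet g=\big(\psi_1(W_1),\ldots,\psi_k(W_k)\big)\sbullet g=\big(\psi_1(W_1)\sbullet g,\ldots,\psi_k(W_k)\sbullet g\big).
\end{equation*}
Equality of the two now follows factor by factor from the commutativity of \eqref{grassprodiag}, i.e. from $\psi_i(W_ig)=\psi_i(W_i)\sbullet g$ for each $i=1,\ldots,k$, which is exactly the $T$-equivariance of the ordinary Pl\"ucker embedding $\psi_i\colon G(d_i,n)\hookrightarrow\mathds{P}_{\mathds{C}}^{{n\choose d_i}-1}$. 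One should also remark, or verify in passing, that \eqref{tactioncargrass} and \eqref{tactioncarpro} are indeed right $T$-actions: associativity and the identity axiom hold because they hold in each factor (again by the single-Grassmannian case), and in the projective setting the scalars $t_{I}$ multiply: $t_I$ for a product $gg'$ equals the product of the corresponding diagonal entries, so $(p_I)\sbullet(gg')=((p_I)\sbullet g)\sbullet g'$ componentwise.

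I do not anticipate a genuine obstacle: the statement is a formal consequence of the $k=1$ case together with the observation that products of commutative squares (along the functor $(-)\times T$, or rather along the product over the common factor $T$) are again commutative. The only point requiring a little care — and worth spelling out so the later use in Lemma~\ref{fmaffine} is clean — is that the diagonal copy of $T$ is what makes the factorwise actions assemble into a single action on the product; if one used independent tori on each factor the diagram would still commute but would not be the relevant one. Thus the write-up is essentially: cite \eqref{grassprodiag} for each factor, invoke that $\Psi$ is a closed immersion as a product of closed immersions, and conclude commutativity of \eqref{cargrassprodiag} by the coordinatewise computation above.
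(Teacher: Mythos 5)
Your proposal is correct. The paper in fact states Proposition~\ref{tequivariance} without any proof, treating it as the evident componentwise extension of the single-factor $T$-equivariance asserted in diagram~\eqref{grassprodiag}; your coordinatewise diagram chase, reducing to $\psi_i(W_ig)=\psi_i(W_i)\sbullet g$ for each $i$, is precisely the natural argument the authors leave implicit, and your aside on the diagonal copy of $T$ being what makes the factorwise actions assemble into a single action on the product is a fair point worth spelling out.
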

    
    We will also need a lemma that will play an important role in \S\ref{sec4}, which we state now.
     
    \begin{lemma}\label{fmaffine}
    	Let $\mathbf{M}=(M_1,\ldots,M_k)\in\EuScript{M}^{d_1,\ldots,d_k}_n$ be a plurimatroid. Then the \textbf{thin Schubert cell} 
    	    \begin{align}
    	    	\begin{split}\label{celshuflag}
    	    		\mathds{F}_{\mathbf{M}}:=&\left(\prod_{i=1}^{k}G_{M_i}\right)\cap\mathds{F}_{d_1<\cdots<d_k}(\mathds{C}^n)\\
    	    		=&\left\{\big((p_{I_1}),\ldots,(p_{I_k})\big)_{(I_1,\ldots,I_k)\in{[n]\choose d_1,\ldots,d_k}}\in\mathds{F}_{d_1<\cdots<d_k}(\mathds{C}^n)\mid p_{I_j}\neq0\Leftrightarrow I_j\in B_{M_j},\ j=1,\ldots,k\right\}
    	    	\end{split}
    	    \end{align}
    	is affine and a locally closed set in $\mathds{F}_{d_1<\cdots<d_k}(\mathds{C})$.
    \end{lemma}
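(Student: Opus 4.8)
The plan is to realize $\mathds{F}_{\mathbf M}$ as an intersection of sets whose affineness and local closedness are already under control, and then invoke the standard fact that both properties are stable under finite intersection inside a fixed ambient scheme. Concretely, I would write
\[
\mathds{F}_{\mathbf M}=\left(\prod_{i=1}^k G_{M_i}\right)\cap\mathds{F}_{d_1<\cdots<d_k}(\mathds C^n).
\]
By Proposition \ref{gmaffine}, each $G_{M_i}$ is affine and locally closed in $G(d_i,n)$; hence the product $\prod_{i=1}^k G_{M_i}$ is affine and locally closed in $\prod_{i=1}^k G(d_i,n)$, since a finite product of affine (resp. locally closed) immersions is again affine (resp. locally closed), and the Plücker-type embedding $\Psi$ of Proposition \ref{tequivariance} identifies this product with a locally closed subset of $\prod_i\mathds P^{\binom{n}{d_i}-1}_{\mathds C}$. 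On the other hand, $\mathds{F}_{d_1<\cdots<d_k}(\mathds C^n)$ is a closed subvariety of $\prod_{i=1}^k G(d_i,n)$ (the incidence conditions $W_1\subset\cdots\subset W_k$ are closed, cut out by the vanishing of appropriate minors / Plücker-type relations). So $\mathds{F}_{\mathbf M}$ is the intersection inside $\prod_i G(d_i,n)$ of a locally closed subset with a closed subset, hence locally closed.

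For the affineness, the key point is that a locally closed subscheme of an affine scheme which is closed in some open neighborhood is itself affine provided the open is affine; more to the point, I would argue that $\prod_i G_{M_i}$ is an affine scheme (by Proposition \ref{gmaffine} and the fact that a finite product of affine schemes over $\mathds C$ is affine), and that $\mathds{F}_{\mathbf M}$ is a \emph{closed} subscheme of $\prod_i G_{M_i}$. The latter holds because the flag incidence relations, once restricted to the locally closed set $\prod_i G_{M_i}$, are still closed conditions: $\mathds{F}_{d_1<\cdots<d_k}(\mathds C^n)\cap\prod_i G_{M_i}$ is the trace of a closed subscheme of $\prod_i G(d_i,n)$ on the subscheme $\prod_i G_{M_i}$, hence closed in it. A closed subscheme of an affine scheme is affine, so $\mathds{F}_{\mathbf M}$ is affine. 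I would spell out the scheme structure one is using here (the reduced induced structure on each thin cell, matching the convention in \cite{EFG25}) so that "closed subscheme of affine is affine" applies verbatim.

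The only genuine subtlety — and the step I expect to be the main obstacle to write cleanly — is making sure that "affine" really propagates through the chain $G_{M_i}\leadsto \prod_i G_{M_i}\leadsto \mathds{F}_{\mathbf M}$ at the level of schemes rather than merely topological spaces. One must be careful that $\prod_i G_{M_i}$ with the product scheme structure is what Proposition \ref{gmaffine} delivers on each factor, and that taking the fiber product over $\operatorname{Spec}\mathds C$ of finitely many affine schemes yields an affine scheme; this is standard but worth stating. A second point to verify is that the equality of the two descriptions of $\mathds{F}_{\mathbf M}$ in \eqref{celshuflag} holds as sets — this is immediate from the definition of $G_{M_i}$ in \eqref{gm} and the definition of the flag variety as a subvariety of the product of Grassmannians — so no real work is needed there. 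Finally, I would remark that local closedness is automatic once we know $\mathds{F}_{\mathbf M}$ is the intersection of a closed set (the flag variety) with a locally closed set ($\prod_i G_{M_i}$) inside $\prod_i G(d_i,n)$, since the intersection of a locally closed subset with any subset that is locally closed is again locally closed, and closed subsets are in particular locally closed.
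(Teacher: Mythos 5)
Your argument is essentially the paper's own proof: both reduce the claim to Proposition \ref{gmaffine}, observe that $\prod_{i=1}^k G_{M_i}$ is therefore affine and locally closed in $\prod_{i=1}^k G(d_i,n)$, and conclude for the intersection with the flag variety. In fact you supply the justification the paper leaves implicit (the flag variety is closed in the product of Grassmannians, so $\mathds{F}_{\mathbf M}$ is closed in the affine scheme $\prod_i G_{M_i}$, hence affine, and is locally closed as the intersection of a locally closed set with a closed set), so your write-up is correct and, if anything, more complete.
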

    
    \begin{proof}
    	First, note that $\displaystyle\prod_{i=1}^{k}G_{M_i}$ is affine and a locally closed in $\displaystyle\prod_{i=1}^{k}G(d_i,n)$, since each $G_{M_i}$ is affine and a locally closed set in $G(d_i,n)$ by Proposition \ref{gmaffine}. Hence, so is $\mathds{F}_{\mathbf{M}}$.
    \end{proof}
    
    \begin{remark}
    	It is necessary to emphasize here \textbf{our definition of thin Schubert cell} introduced in Lemma \ref{fmaffine} \textbf{should not be confused} with the one established by Gelfand and Serganova in \cite[\S\S4.1 and 6.2, pp. 152 and 154]{GS87}.
    \end{remark}
    
    We are now in a position to establish the desired decomposition. This will enable us to prove Lemma \ref{lemclos}, which will be fundamental in proving the first part of Theorem \ref{subvar-ugq}.
    
    \begin{proposition}\label{fdecomp}
    	We have the following decomposition:
    	    \begin{equation}\label{fmdecomp}
    	    	\mathds{F}_{d_1<\cdots<d_k}(\mathds{C}^n)=\bigsqcup_{\mathbf{M}\in\EuScript{M}^{d_1,\ldots,d_k}_n}\mathds{F}_{\mathbf{M}}.
    	    \end{equation}
    \end{proposition}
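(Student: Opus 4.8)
The plan is to show that the thin Schubert cells $\mathds{F}_{\mathbf{M}}$ are pairwise disjoint and that every point of $\mathds{F}_{d_1<\cdots<d_k}(\mathds{C}^n)$ lies in exactly one of them; combined with Lemma \ref{fmaffine} (which already tells us each $\mathds{F}_{\mathbf{M}}$ is locally closed), this yields the asserted decomposition. First I would fix a point $W_{\sbullet}=(W_1\subset\cdots\subset W_k)\in\mathds{F}_{d_1<\cdots<d_k}(\mathds{C}^n)$ and consider its image $\Psi(W_{\sbullet})=\big((p_{I_1})_{I_1\in{[n]\choose d_1}},\ldots,(p_{I_k})_{I_k\in{[n]\choose d_k}}\big)$ under the generalized Pl\"ucker embedding \eqref{genpluck}. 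For each $j=1,\ldots,k$, set
\[
B_{M_j}:=\left\{I_j\in{[n]\choose d_j}\ \middle|\ p_{I_j}\neq0\right\}.
\]
Since $W_j\in G(d_j,n)$, this coordinate vector is a genuine (nonzero) Pl\"ucker vector, so $B_{M_j}$ is nonempty and, by the classical fact that the supports of Pl\"ucker vectors are exactly the bases of matroids (the Pl\"ucker relations force the exchange axiom \eqref{exchange}), the pair $M_j=([n],B_{M_j})$ is a matroid of rank $d_j$ on $[n]$; hence $\mathbf{M}=(M_1,\ldots,M_k)\in\EuScript{M}^{d_1,\ldots,d_k}_n$. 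By construction $W_{\sbullet}\in\mathds{F}_{\mathbf{M}}$, which gives $\mathds{F}_{d_1<\cdots<d_k}(\mathds{C}^n)=\bigcup_{\mathbf{M}}\mathds{F}_{\mathbf{M}}$.

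For disjointness, suppose $W_{\sbullet}\in\mathds{F}_{\mathbf{M}}\cap\mathds{F}_{\mathbf{M}'}$ with $\mathbf{M}=(M_1,\ldots,M_k)$ and $\mathbf{M}'=(M_1',\ldots,M_k')$. The defining condition of $\mathds{F}_{\mathbf{M}}$ says that, for each $j$, the index set $B_{M_j}$ is precisely the support $\{I_j : p_{I_j}\neq 0\}$ of the $j$-th block of $\Psi(W_{\sbullet})$; the same holds for $B_{M_j'}$. Hence $B_{M_j}=B_{M_j'}$ for all $j$, so $M_j=M_j'$ and $\mathbf{M}=\mathbf{M}'$. (Here I should note that the Pl\"ucker coordinates are only defined up to a common nonzero scalar in each factor, but the \emph{support} — the set of indices on which a coordinate is nonzero — is scalar-independent, so the condition $p_{I_j}\neq0\Leftrightarrow I_j\in B_{M_j}$ is well defined on $G(d_j,n)$; this is exactly the point already used in Proposition \ref{gmaffine}.) This establishes that the union is disjoint.

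The only genuinely non-formal ingredient is the claim that the support of a nonzero Pl\"ucker vector is the basis set of a matroid, i.e. satisfies the exchange property \eqref{exchange}; I would either cite this standard fact (it goes back to the matroid-theoretic stratification of \cite{GGMS87} and is implicit in \cite{EFG25}) or derive it in one line from the three-term Pl\"ucker relations. Everything else — existence of the matroid attached to a point, uniqueness via equality of supports — is bookkeeping. Finally, local closedness of each piece is Lemma \ref{fmaffine}, so the family $\{\mathds{F}_{\mathbf{M}}\}_{\mathbf{M}\in\EuScript{M}^{d_1,\ldots,d_k}_n}$ is a decomposition of $\mathds{F}_{d_1<\cdots<d_k}(\mathds{C}^n)$ in the sense defined in Section \S\ref{sec2}, proving \eqref{fmdecomp}. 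I expect the main (modest) obstacle to be making sure the matroid axioms really hold for the support rather than merely asserting it, and keeping the scalar-ambiguity of projective coordinates from muddying the uniqueness argument.
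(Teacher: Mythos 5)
Your proof is correct and follows essentially the same route as the paper: both ultimately reduce to the Grassmannian decomposition $G(d_i,n)=\bigsqcup_{M_i\in\EuScript{M}^{d_i}_n}G_{M_i}$ from Proposition 2.4 of \cite{EFG25}, which the paper invokes directly and then combines via a chain of set identities (product of disjoint unions, then intersection with the flag variety), whereas you unpack the same fact pointwise by assigning to each flag the plurimatroid of Pl\"ucker supports and checking uniqueness. The one ingredient you flag as needing justification --- that the support of a nonzero Pl\"ucker vector satisfies the exchange axiom --- is precisely what that citation supplies, so there is no gap; the difference is purely one of presentation.
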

    \begin{proof}
    	$(\subset)$ We proceed by a direct calculation:
    	    \begin{align*}
    	    	\mathds{F}_{d_1<\cdots<d_k}(\mathds{C}^n)&\subset\left(\prod_{i=1}^kG(d_i,n)\right)\cap\mathds{F}_{d_1<\cdots<d_k}(\mathds{C}^n)\\
    	    	&=\left(\prod_{i=1}^k\left(\bigsqcup_{M_i\in\EuScript{M}^{d_i}_n}G_{M_i}\right)\right)\cap\mathds{F}_{d_1<\cdots<d_k}(\mathds{C}^n) \tag*{(by \cite[Prop. 2.4, p. 5]{EFG25})}\\
    	    	&=\left(\bigsqcup_{\mathbf{M}\in\EuScript{M}^{d_1,\ldots,d_k}_n}\left(\prod_{i=1}^kG_{M_i}\right)\right)\cap\mathds{F}_{d_1<\cdots<d_k}(\mathds{C}^n)\\
    	    	&=\bigsqcup_{\mathbf{M}\in\EuScript{M}^{d_1,\ldots,d_k}_n}\left(\left(\prod_{i=1}^kG_{M_i}\right)\cap\mathds{F}_{d_1<\cdots<d_k}(\mathds{C}^n)\right)\\
    	    	&=\bigsqcup_{\mathbf{M}\in\EuScript{M}^{d_1,\ldots,d_k}_n}\mathds{F}_{\mathbf{M}}, \tag*{(by definition of $\mathds{F}_{\mathbf{M}}$)}
    	    \end{align*}
    	$(\supset)$ It follows quickly since $\mathds{F}_{\mathbf{M}}\subset\mathds{F}_{d_1<\cdots<d_k}(\mathds{C}^n)$ for each plurimatroid $\mathbf{M}\in\EuScript{M}^{d_1,\ldots,d_k}_n$. 
    	
    	\noindent Finally, \eqref{fmdecomp} is a decomposition by Lemma \ref{fmaffine}.        
    \end{proof}
    
    Let $\mathbf{M}\in\EuScript{M}^{d_1,\ldots,d_k}_n$ be arbitrary. If $g\in T$ and $\mathbf{\Lambda}\in\mathds{F}_{\mathbf{M}}$, then $\mathbf{\Lambda}\sbullet g\in\mathds{F}_{\mathbf{M}}$ and consequently the thin Schubert cell $\mathds{F}_{\mathbf{M}}$ is $T$-invariant. Furthermore, any two elements in $\mathds{F}_{\mathbf{M}}$ have the same isotropy group, which will be denoted by $\textrm{Stab}_T(\mathds{F}_{\mathbf{M}})$ and is a normal subgroup of $T$. Hence, the quotient
        \begin{equation}\label{subtorflag}
        	T_{\mathbf{M}}:=\frac{T}{\mathrm{Stab}_T(\mathds{F}_{\mathbf{M}})}
        \end{equation}
    is an abelian group that can be identified with a \textit{subtorus} of $T$.
    
    The action of $T$ on $\mathds{F}_{\mathbf{M}}$ induces a set-theoretic free action of $T_{\mathbf{M}}$ on $\mathds{F}_{\mathbf{M}}$ given by
        \begin{equation*}\label{tmaction}
        	\begin{array}{cccc}
        		\sbullet:&\mathds{F}_{\mathbf{M}}\times T_{\mathbf{M}}&\longrightarrow&\mathds{F}_{\mathbf{M}}\\
        		&(\mathbf{\Lambda},[g])&\longmapsto&\mathbf{\Lambda}\,\!\sbullet\,\![g]:=\mathbf{\Lambda}\sbullet g
        	\end{array}.
        \end{equation*}
    
    For each $\mathbf{\Lambda}\in\mathds{F}_{\mathbf{M}}$ we have $\mathbf{\Lambda}\sbullet T_{\mathbf{M}}=\mathbf{\Lambda}\sbullet T$. Likewise, the map
        \begin{equation*}
        	\begin{array}{cccc}
        		\theta:&T_{\mathbf{M}}&\longrightarrow&\mathbf{\Lambda}\sbullet T_{\mathbf{M}}\\
        		&[g]&\longmapsto&\mathbf{\Lambda}\sbullet\,\![g]
        	\end{array}
        \end{equation*}
    is a bijection between $T_{\mathbf{M}}$ and the orbit $\mathbf{\Lambda}\sbullet T_{\mathbf{M}}$.  
    
    Let $\nicefrac{\mathds{F}_{\mathbf{M}}}{T_{\mathbf{M}}}$ be the set of orbits of $\mathds{F}_{\mathbf{M}}$ under the action of $T_{\mathbf{M}}$ and consider the topological space $(\mathds{F}_{\mathbf{M}},\tau_{\text{\tiny Zar}})$; then the map 
        \begin{equation}\label{quomap}
        	\begin{array}{cccc}
        		\pi:&\mathds{F}_{\mathbf{M}}&\longrightarrow&\dfrac{\mathds{F}_{\mathbf{M}}}{T_{\mathbf{M}}}\\
        		&\mathbf{\Lambda}&\longrightarrow&\mathbf{\Lambda}\sbullet T_{\mathbf{M}}
        	\end{array}
        \end{equation}
    is surjective and induces a quotient topology on $\nicefrac{\mathds{F}_{\mathbf{M}}}{T_{\mathbf{M}}}$, making it an orbit space. Thus, we have a \emph{fiber bundle}
        \begin{equation}\label{fibflag}
        	T_{\mathbf{M}}\longrightarrow\mathds{F}_{\mathbf{M}}\stackrel{\pi}{\longrightarrow}\frac{\mathds{F}_{\mathbf{M}}}{T_{\mathbf{M}}},
        \end{equation}
    
    These bundles are topologically trivial $\left(\mathds{F}_{\mathbf{M}}\cong\frac{\mathds{F}_{\mathbf{M}}}{T_{\mathbf{M}}}\times T_{\mathbf{M}}\right)$: it is possible to construct a representation $\mathbf{\Lambda}_0$ of an arbitrary element $W_{\sbullet}:=W_1\subset\cdots\subset W_k\in\mathds{F}_{\mathbf{M}}$; consequently, there is a section
        \begin{equation*}
        	\begin{tikzcd}
        		T_{\mathbf{M}}\arrow{r}&\mathds{F}_{\mathbf{M}}\arrow{r}{\pi}&\arrow[bend left]{l}{s}\dfrac{\mathds{F}_{\mathbf{M}}}{T_{\mathbf{M}}}\
        	\end{tikzcd}
        \end{equation*}
    given by
        \begin{align*}
        	\begin{array}{cccc}
        		s:&\dfrac{\mathds{F}_{\mathbf{M}}}{T_{\mathbf{M}}}&\longrightarrow&\mathds{F}_{\mathbf{M}}\\
        		&\mathbf{\Lambda}\sbullet T_{\mathbf{M}}&\longmapsto&\mathbf{\Lambda_0}
        	\end{array}
        \end{align*}   
    together with morphisms    
        \begin{equation*}
        	\begin{array}{cccc}
        		\alpha:&\mathds{F}_{\mathbf{M}}&\longrightarrow&\dfrac{\mathds{F}_{\mathbf{M}}}{T_{\mathbf{M}}}\times T_{\mathbf{M}}\\
        		&\mathbf{\Lambda}&\longmapsto&\big(\pi(\mathbf{\Lambda}),t\big)
        	\end{array}
        \end{equation*}
    where $t\in T_{\mathbf{M}}$ is such that $\mathbf{\Lambda}=\mathbf{\Lambda_0}\sbullet t$ and
        \begin{equation*}
        	\begin{array}{cccc}
        		\beta:&\dfrac{\mathds{F}_{\mathbf{M}}}{T_{\mathbf{M}}}\times T_{\mathbf{M}}&\longrightarrow&\mathds{F}_{\mathbf{M}}\\
        		&(\mathbf{\Lambda}\sbullet T_{\mathbf{M}},t)&\longmapsto&s(\mathbf{\Lambda}\sbullet T_{\mathbf{M}})\sbullet t
        	\end{array}
        \end{equation*} 
    that are inverses of each other. Let us now consider an example illustrating the preceding discussion.
    
    \begin{example}
    	Consider the flag variety $\mathds{F}_{2<3}(\mathds{C}^4)$, $\mathcal{B}=\big\{\{1,2\},\{1,3\},\{1,4\},\{2,3\},\{2,4\},\{3,4\}\big\}$ and $\mathcal{B}'=\big\{\{1,2,3\},\{1,2,4\},\{1,3,4\},\{2,3,4\}\big\}$ sets of bases for the corresponding matroids $M=([4],\mathcal{B})\in\EuScript{M}^{2}_4$ and $M'=([4],\mathcal{B}')\in\EuScript{M}^{3}_4$, respectively. Let $T<\textrm{GL}_4(\mathds{C})$ be the maximal torus and $\mathbf{M}=(M,M')\in\EuScript{M}^{2,3}_4$. Then
    	    \begin{equation*}
    	    	\mathds{F}_{\mathbf{M}}=\left\{\begin{pmatrix*}
    	    		1&0&x_{13}&x_{14}\\
    	    		0&1&x_{23}&x_{24}
    	    	\end{pmatrix*}\subset\begin{pmatrix*}
    	    		1&0&0&x_{14}-x_{13}y_{34}\\ 
    	    		0&1&0&x_{24}-x_{23}y_{34}\\ 
    	    		0&0&1&y_{34}
    	    	\end{pmatrix*}:\begin{array}{c}
    	    		x_{1j},x_{2j},y_{34}\in\mathds{C}^*\\
    	    		x_{13}x_{24}-x_{14}x_{23}\neq0\\
    	    		x_{i4}-x_{i3}y_{34}\neq0
    	    	\end{array}\hspace*{-0.15cm}       	  
    	    	\right\}.
    	    \end{equation*}
    	
    	The representative matrix of an element $W_{\sbullet}\in\mathds{F}_{\mathbf{M}}$ is
    	    \begin{equation}\label{matrep3}
    	    	\mathbf{\Lambda}=\begin{pmatrix*}
    	    		1&0&x_{13}&x_{14}\\
    	    		0&1&x_{23}&x_{24}\\
    	    		0&0&1&y_{34}
    	    	\end{pmatrix*},
    	    \end{equation}
    	the isotropy group is $\textrm{Stab}_{T}(\mathds{F}_{\mathbf{M}})=\{t\in T\mid t_1=t_2=t_3=t_4\}\cong\mathds{C}^*$, which implies that $T_{\mathbf{M}}\cong(\mathds{C}^*)^3$. Since $\dim\mathds{F}_{\mathbf{M}}=5>3=\dim T_{\mathbf{M}}$, it is necessary to fix two parameters, say $\lambda_1=\dfrac{x_{14}}{x_{13}y_{34}}\rule{0pt}{1.5em}$ and $\lambda_2=\dfrac{x_{24}}{x_{23}y_{34}}$. Thus, we can rewrite the matrix \eqref{matrep3} as
    	    \begin{equation*}
    	    	\mathbf{\Lambda}=\begin{pmatrix*}
    	    		1&0&x_{13}&\lambda_1 x_{13}y_{34}\\
    	    		0&1&x_{23}&\lambda_2 x_{23}y_{34}\\
    	    		0&0&1&y_{34}
    	    	\end{pmatrix*},
    	    \end{equation*}
    	where setting the variable entries to one we obtain
    	    \begin{equation*}
    	    	\mathbf{\Lambda_0}=\begin{pmatrix*}
    	    		1&0&1&\lambda_1\\
    	    		0&1&1&\lambda_2\\
    	    		0&0&1&1
    	    	\end{pmatrix*}.
    	    \end{equation*}
    	
    	Now, consider the matrices
    	    \begin{equation*}
    	    	\begin{aligned}
    	    		s=\begin{pmatrix*}\vspace*{0.1cm}
    	    			1&0&0\\ \vspace*{0.2cm}
    	    			0&\dfrac{x_{23}}{x_{13}}&0\\ \vspace*{0.1cm}
    	    			0&0&\dfrac{1}{x_{13}} 
    	    		\end{pmatrix*}\in\textrm{GL}_3(\mathds{C})
    	    	\end{aligned}\quad\text{and}\quad
    	    	\begin{aligned}
    	    		t=\begin{pmatrix*}\vspace*{0.1cm}
    	    			1&0&0&0\\ \vspace*{0.2cm}
    	    			0&\dfrac{x_{13}}{x_{23}}&0&0\\
    	    			0&0&x_{13}&0\\
    	    			0&0&0&x_{13}y_{34} 
    	    		\end{pmatrix*}\in T_{\mathbf{M}}
    	    	\end{aligned}
    	    \end{equation*}
    	we have
    	    \begin{align*}
    	    	s\,\mathbf{\Lambda_0}\, t&=\begin{pmatrix*}\vspace*{0.1cm}
    	    		1&0&0\\ \vspace*{0.2cm}
    	    		0&\dfrac{x_{23}}{x_{13}}&0\\ \vspace*{0.1cm}
    	    		0&0&\dfrac{1}{x_{13}} 
    	    	\end{pmatrix*}\begin{pmatrix*}
    	    		1&0&1&\lambda_1\\
    	    		0&1&1&\lambda_2\\
    	    		0&0&1&1
    	    	\end{pmatrix*}\begin{pmatrix*}\vspace*{0.1cm}
    	    		1&0&0&0\\ \vspace*{0.2cm}
    	    		0&\dfrac{x_{13}}{x_{23}}&0&0\\
    	    		0&0&x_{13}&0\\
    	    		0&0&0&x_{13}y_{34} 
    	    	\end{pmatrix*}\\
    	    	&=\begin{pmatrix*}\vspace*{0.1cm}
    	    		1&0&0\\ \vspace*{0.2cm}
    	    		0&\dfrac{x_{23}}{x_{13}}&0\\ \vspace*{0.075cm}
    	    		0&0&\dfrac{1}{x_{13}} 
    	    	\end{pmatrix*}\begin{pmatrix*}\vspace*{0.1cm}
    	    		1&0&x_{13}&\lambda_1 x_{13}y_{34}\\ \vspace*{0.1cm}
    	    		0&\dfrac{x_{13}}{x_{23}}&x_{13}&\lambda_2 x_{13}y_{34}\\
    	    		0&0&x_{13}&x_{13}y_{34}
    	    	\end{pmatrix*}\\
    	    	&=\begin{pmatrix*}
    	    		1&0&x_{13}&\lambda_1 x_{13}y_{34}\\
    	    		0&1&x_{23}&\lambda_2 x_{23}y_{34}\\
    	    		0&0&1&y_{34}
    	    	\end{pmatrix*}\\
    	    	&=\mathbf{\Lambda}.
    	    \end{align*}
    	We conclude that $\mathbf{\Lambda_0}\sbullet t=\mathbf{\Lambda}$ (as flags) and consequently
    	    \begin{equation*}
    	    	\frac{\mathds{F}_{\mathbf{M}}}{T_{\mathbf{M}}}\cong(\mathds{C}^*)^2\setminus\mathcal{V}\big((\lambda_2-\lambda_1){\cdot}(\lambda_1-1){\cdot}(\lambda_2-1)\big).
    	    \end{equation*}
    \end{example}
    
    The actions of the symmetric group $\mathfrak{S}_n$ on Grassmannians and projective spaces, as introduced in \S\ref{subsec2.1} and defined in \eqref{snactiongrass} and \eqref{snactionproj}, respectively, extend naturally to Cartesian products of such varieties. We now state this result formally.
    
    \begin{proposition}\label{snequivariance}
    	Consider the actions of $\mathfrak{S}_n$ given by
    	    \begin{equation}\label{snactioncargrass}
    	    	\begin{array}{cccc}
    	    		\sbullet:&\mathfrak{S}_n\times\displaystyle\prod_{i=1}^{k}G(d_i,n)&\longrightarrow&\displaystyle\prod_{i=1}^{k}G(d_i,n)\\
    	    		&(\sigma,\mathbf{W})&\longmapsto&\sigma\sbullet\mathbf{W}:=(\sigma\sbullet W_1,\ldots,\sigma\sbullet W_k)
    	    	\end{array}
    	    \end{equation}      
    	and
    	    \begin{equation}\label{snactioncarproj}
    	    	\begin{array}{cccc}
    	    		\sbullet:&\mathfrak{S}_n\times\displaystyle\prod_{i=1}^k\mathds{P}^{{n\choose d_i}-1}_{\mathds{C}}&\longrightarrow&\displaystyle\prod_{i=1}^k\mathds{P}^{{n\choose d_i}-1}_{\mathds{C}}\\
    	    		&\left(\sigma,(\mathbf{p_I})_{\mathbf{I}\in{[n]\choose d_1,\ldots,d_k}}\right)&\longmapsto&\sigma\sbullet(\mathbf{p_I})_{\mathbf{I}\in{[n]\choose d_1,\ldots,d_k}}:=\left(\sigma\sbullet(p_{I_1})_{I_1\in{[n]\choose d_1}},\ldots,\sigma\sbullet(p_{I_k})_{I_k\in{[n]\choose d_k}}\right)
    	    	\end{array}.
    	    \end{equation}
    	
    	Then, the map $\Psi$ introduced in Proposition \ref{tequivariance} and defined in \eqref{genpluck}, is $\mathfrak{S}_n$-equivariant; i.e., the next diagram commutes
    	    \begin{equation}\label{snequivariancediag}
    	    	\begin{tikzcd}
    	    		\mathfrak{S}_n\times\displaystyle\prod_{i=1}^kG(d_i,n)\arrow{r}{\sbullet} \arrow[hook]{d}[swap]{\operatorname{id}\times\Psi}
    	    		&\displaystyle\prod_{i=1}^kG(d_i,n) \arrow[hook]{d}{\Psi}\\
    	    		\mathfrak{S}_n\times\displaystyle\prod_{i=1}^k\mathds{P}_{\mathds{C}}^{{n\choose d_i}-1}\arrow{r}{\sbullet} &\displaystyle\prod_{i=1}^k\mathds{P}_{\mathds{C}}^{{n\choose d_i}-1} 
    	    	\end{tikzcd}
    	    \end{equation}
    \end{proposition}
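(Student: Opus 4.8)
The plan is to reduce the claim to the single-factor case already established in Proposition~\ref{snequivprop}, using that every vertex and every edge of the diagram \eqref{snequivariancediag} is a $k$-fold product of its factors, with $\mathfrak{S}_n$ acting diagonally.

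First I would observe that the maps \eqref{snactioncargrass} and \eqref{snactioncarproj} are indeed left $\mathfrak{S}_n$-actions: since each is defined coordinatewise from the actions \eqref{snactiongrass} and \eqref{snactionproj}, both the associativity identity $\sigma\sbullet(\tau\sbullet-)=(\sigma\tau)\sbullet-$ and the identity axiom $\id\sbullet-=-$ hold in every coordinate, hence for the tuples, exactly as in the coordinatewise verification preceding Proposition~\ref{snequivprop}. Then, fixing an arbitrary $\sigma\in\mathfrak{S}_n$ and $\mathbf{W}=(W_1,\ldots,W_k)\in\prod_{i=1}^kG(d_i,n)$, I would chase this element around the square. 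Going down the left edge and then across gives
\[
\Psi(\sigma\sbullet\mathbf{W})=\Psi(\sigma\sbullet W_1,\ldots,\sigma\sbullet W_k)=\big(\psi_1(\sigma\sbullet W_1),\ldots,\psi_k(\sigma\sbullet W_k)\big),
\]
by the definition \eqref{snactioncargrass} of the diagonal action on $\prod_{i=1}^kG(d_i,n)$ together with the definition \eqref{genpluck} of $\Psi=\psi_1\times\cdots\times\psi_k$.

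Applying Proposition~\ref{snequivprop} in each factor --- every Pl\"ucker embedding $\psi_i$ is $\mathfrak{S}_n$-equivariant --- yields $\psi_i(\sigma\sbullet W_i)=\sigma\sbullet\psi_i(W_i)$ for all $i$, so that
\[
\Psi(\sigma\sbullet\mathbf{W})=\big(\sigma\sbullet\psi_1(W_1),\ldots,\sigma\sbullet\psi_k(W_k)\big)=\sigma\sbullet\big(\psi_1(W_1),\ldots,\psi_k(W_k)\big)=\sigma\sbullet\Psi(\mathbf{W}),
\]
where the middle equality is the definition \eqref{snactioncarproj} of the diagonal action on $\prod_{i=1}^k\mathds{P}_{\mathds{C}}^{{n\choose d_i}-1}$. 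This is precisely the commutativity of \eqref{snequivariancediag} that we need to establish.

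I do not expect a genuine obstacle here: every step is either the unwinding of a coordinatewise definition or a per-factor invocation of Proposition~\ref{snequivprop}. The only delicate bookkeeping is with the sign factors $\varepsilon_{\sigma,I_i}$ appearing in \eqref{snactionproj}, but these are already accounted for in the statement of Proposition~\ref{snequivprop} and so need not be re-examined. Should a fully self-contained argument be preferred, one could instead redo the single-factor computation directly --- writing $\sigma\sbullet W_i=W_iP_\sigma$ and comparing the maximal minors of $W_iP_\sigma$ with those of $W_i$ to recover the sign $\varepsilon_{\sigma,I_i}$ --- but reusing Proposition~\ref{snequivprop} makes this superfluous.
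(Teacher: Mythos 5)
Your proof is correct, and it is precisely the argument the paper implicitly intends: the paper states Proposition~\ref{snequivariance} without a printed proof, noting only that the $\mathfrak{S}_n$-actions ``extend naturally to Cartesian products,'' and the natural justification is exactly your coordinatewise diagram chase that unwinds the definitions of the diagonal actions \eqref{snactioncargrass}, \eqref{snactioncarproj} and the product embedding $\Psi=\psi_1\times\cdots\times\psi_k$, then invokes Proposition~\ref{snequivprop} in each factor. Nothing more is needed; the remark that one could re-derive the sign $\varepsilon_{\sigma,I_i}$ from minors of $W_iP_\sigma$ is a sound observation but, as you say, superfluous once Proposition~\ref{snequivprop} is available.
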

    
    \begin{remark}
    	The flag variety is $\mathfrak{S}_n$-invariant; that is, if $\sigma\in\mathfrak{S}_n$ and $W_{\sbullet}\in\mathds{F}_{d_1<\cdots<d_k}(\mathds{C}^n)$ be arbitrary, then $\sigma\sbullet W_{\sbullet}:=\sigma\sbullet W_1\subset\cdots\subset\sigma\sbullet W_k\in\mathds{F}_{d_1<\cdots<d_k}(\mathds{C}^n)$.
    \end{remark}
    
    \begin{remark}
    	Let $\sigma\in\mathfrak{S}_n$ and $\mathbf{M}\in\mathscr{M}^{d_1,\ldots,d_k}_n$. Then $\sigma(\mathbf{M}):=\sigma(M_1)\times\cdots\times\sigma(M_k)\in\mathscr{M}^{d_1,\ldots,d_k}_n$.
    \end{remark}
    
    \begin{proposition}\label{fmisoprop}
    	Let $\sigma\in\mathfrak{S}_n$ be an arbitrary permutation. Then $\sigma(\mathds{F}_{\mathbf{M}}):=\mathds{F}_{\sigma(\mathbf{M})}$ is a thin Schubert cell if and only if $\mathds{F}_{\mathbf{M}}$ is a thin Schubert cell. Moreover, there exists an isomorphism $\mathds{F}_{\mathbf{M}}\cong\mathds{F}_{\sigma(\mathbf{M})}$.
    \end{proposition}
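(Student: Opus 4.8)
The plan is to transcribe the proof of Proposition \ref{gmisoprop} to the flag setting by working factor by factor and then restricting to the flag locus. Let $\mathfrak{S}_n$ act diagonally on the product of Grassmannians via \eqref{snactioncargrass} and write
\[
\Phi_\sigma:\prod_{i=1}^k G(d_i,n)\longrightarrow\prod_{i=1}^k G(d_i,n),\qquad \mathbf{W}\longmapsto\sigma\sbullet\mathbf{W}=(W_1P_\sigma,\ldots,W_kP_\sigma).
\]
Each component $W\mapsto WP_\sigma$ is an automorphism of $G(d_i,n)$ with inverse $W\mapsto WP_{\sigma^{-1}}$ (this follows from the associativity of the $\mathfrak{S}_n$-action checked in the proof preceding Proposition \ref{snequivprop}), so $\Phi_\sigma$ is an automorphism of $\prod_i G(d_i,n)$ with $\Phi_\sigma^{-1}=\Phi_{\sigma^{-1}}$.

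Next, by Proposition \ref{gmisoprop} applied in each factor, $\Phi_\sigma$ carries $\prod_{i=1}^k G_{M_i}$ isomorphically onto $\prod_{i=1}^k G_{\sigma(M_i)}$ (the target is a product of thin Schubert cells by the Remark that $\sigma(\mathbf{M})$ is again a plurimatroid); and by the Remark asserting that $\mathds{F}_{d_1<\cdots<d_k}(\mathds{C}^n)$ is $\mathfrak{S}_n$-invariant, $\Phi_\sigma$ also preserves the flag variety. Hence $\Phi_\sigma$ restricts to an isomorphism
\[
\varphi_\sigma:\mathds{F}_{\mathbf{M}}\ \xrightarrow{\ \sim\ }\ \mathds{F}_{\sigma(\mathbf{M})},\qquad \mathbf{\Lambda}\longmapsto\sigma\sbullet\mathbf{\Lambda},
\]
between the intersections $\big(\prod_i G_{M_i}\big)\cap\mathds{F}_{d_1<\cdots<d_k}(\mathds{C}^n)=\mathds{F}_{\mathbf{M}}$ and $\big(\prod_i G_{\sigma(M_i)}\big)\cap\mathds{F}_{d_1<\cdots<d_k}(\mathds{C}^n)=\mathds{F}_{\sigma(\mathbf{M})}$, with inverse $\varphi_{\sigma^{-1}}$. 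This delivers at once the identity $\sigma(\mathds{F}_{\mathbf{M}})=\mathds{F}_{\sigma(\mathbf{M})}$ and the asserted isomorphism; and since $\varphi_\sigma$ is a bijection, $\mathds{F}_{\sigma(\mathbf{M})}$ is non-empty — equivalently, a thin Schubert cell in the sense of Lemma \ref{fmaffine} — exactly when $\mathds{F}_{\mathbf{M}}$ is. Alternatively, one may record the pointwise equivalence
\[
\sigma\sbullet(\mathbf{p_I})_{\mathbf{I}\in{[n]\choose d_1,\ldots,d_k}}\in\mathds{F}_{\sigma(\mathbf{M})}\iff(\mathbf{p_I})_{\mathbf{I}\in{[n]\choose d_1,\ldots,d_k}}\in\mathds{F}_{\mathbf{M}},
\]
obtained by taking the conjunction of the $k$ Grassmannian equivalences from the proof of Proposition \ref{gmisoprop} and intersecting with the $\mathfrak{S}_n$-invariant flag variety.

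I expect no genuine obstacle here: the statement is the flag-variety transcription of Proposition \ref{gmisoprop}, and so is the argument. The single point deserving care is that $\Phi_\sigma$, which a priori only respects the product-of-Grassmannians structure, must be checked to preserve the flag condition $W_1\subset\cdots\subset W_k$; but right multiplication by $P_\sigma$ is performed simultaneously on every $W_i$, so each inclusion is preserved. This is precisely the content of the Remark on $\mathfrak{S}_n$-invariance of $\mathds{F}_{d_1<\cdots<d_k}(\mathds{C}^n)$, so nothing beyond invoking that Remark is required, and everything else is routine transport of structure through a product and an intersection.
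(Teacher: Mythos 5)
Your argument is correct and follows essentially the same route as the paper: both proofs take the Cartesian product $\varphi^{(1)}_\sigma\times\cdots\times\varphi^{(k)}_\sigma$ of the Grassmannian isomorphisms from Proposition~\ref{gmisoprop} and restrict it to the flag locus, using the $\mathfrak{S}_n$-invariance of $\mathds{F}_{d_1<\cdots<d_k}(\mathds{C}^n)$ so that the restriction indeed lands in $\mathds{F}_{\sigma(\mathbf{M})}$. Your version makes explicit the step that the paper leaves implicit (that $\Phi_\sigma$ preserves the flag condition because $P_\sigma$ is applied simultaneously to all $W_i$), which is a welcome clarification, but it is not a different proof.
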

    
    \begin{proof}
    	The first part follows immediately from Proposition \ref{gmisoprop}. On the other hand, the morphism
    	    \begin{equation}\label{fmisomorphism}
    	    	\begin{array}{cccc}
    	    		\varPhi_{\sigma}:&\mathds{F}_{\mathbf{M}}&\longrightarrow&\mathds{F}_{\sigma(\mathbf{M})}\\
    	    		&\left((\mathbf{p_I})_{\mathbf{I}\in{[n]\choose d_1,\ldots,d_k}}\right)&\longmapsto&\varPhi_{\sigma}\left((\mathbf{p_I})_{\mathbf{I}\in{[n]\choose d_1,\ldots,d_k}}\right):=\sigma\sbullet(\mathbf{p_I})_{\mathbf{I}\in{[n]\choose d_1,\ldots,d_k}}
    	    	\end{array},
    	    \end{equation}
    	where $\varPhi_{\sigma}:=\varphi^{(1)}_{\sigma}\times\cdots\times\varphi^{(k)}_{\sigma}$ naturally denotes the Cartesian product of the isomorphisms $\varphi^{(s)}_{\sigma}:G_{M_s}\longrightarrow G_{\sigma(M_s)}$ \big(see Proposition \ref{gmisoprop}\eqref{gmisomorphism}\big), is the desired isomorphism whose inverse is given by $\varPhi^{-1}_{\sigma}=\varPhi_{\sigma^{-1}}$.
    \end{proof}
    
\section{Algebro-geometric quotients and the homology of $T$-invariant subvarieties}\label{sec4}
    This section constitutes the core of the paper. Here, we prove the existence of universal geometric quotients of thin Schubert cells and $T$-invariant subvarieties, which, in particular, are universal categorical quotients. Furthermore, we provide a characterization of the $T$-invariant subvarieties in terms of the thin Schubert cell with which they intersect densely and the quotient associated with this intersection.
    
    \begin{theorem}\label{fm-ugq}
    	The universal geometric quotient of $\mathds{F}_{\mathbf{M}}$ by $T_{\mathbf{M}}$, denoted by $\EuScript{F}_{\mathbf{M}}$ exists, is affine and the morphism
    	    \begin{align}\label{pimor}
    	    	\begin{array}{cccc}
    	    		\pi:&\mathds{F}_{\mathbf{M}}&\longrightarrow&\EuScript{F}_{\mathbf{M}}\\
    	    		&\mathbf{\Lambda}&\longmapsto&\mathbf{\Lambda}\sbullet T_{\mathbf{M}}
    	    	\end{array}
    	    \end{align}
    	is universally submersive; in particular, $\EuScript{F}_{\mathbf{M}}$ is a universal categorical quotient.
    \end{theorem}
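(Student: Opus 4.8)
The plan is to reduce the statement to the structure already assembled in Section §3: the thin Schubert cell $\mathds{F}_{\mathbf{M}}$ is affine and locally closed (Lemma \ref{fmaffine}), it carries a set-theoretically free action of the subtorus $T_{\mathbf{M}}$, and the projection $\pi$ realizes it as a topologically trivial principal $T_{\mathbf{M}}$-bundle with global section $s$ and trivialization $\mathds{F}_{\mathbf{M}}\cong \nicefrac{\mathds{F}_{\mathbf{M}}}{T_{\mathbf{M}}}\times T_{\mathbf{M}}$. I would first upgrade this set-theoretic/topological trivialization to an algebraic one: the representative matrix $\mathbf{\Lambda}_0$ and the maps $\alpha,\beta$ constructed just before the Example are in fact morphisms of varieties (the entries of $\mathbf{\Lambda}_0$ are regular functions on $\mathds{F}_{\mathbf{M}}$, being ratios of Plücker coordinates that are nonvanishing on the cell), so $\mathds{F}_{\mathbf{M}}\cong Y\times T_{\mathbf{M}}$ as $T_{\mathbf{M}}$-varieties, where $Y:=\operatorname{im}(s)\subset\mathds{F}_{\mathbf{M}}$ with the trivial action. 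Then the candidate quotient is $\EuScript{F}_{\mathbf{M}}:=Y$ together with $\pi=\operatorname{pr}_1\circ\alpha$.

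**Key steps, in order.** (1) Verify $Y$ is affine: it is a closed subvariety of the affine variety $\mathds{F}_{\mathbf{M}}$ cut out by the equations "variable entries equal $1$", hence affine; alternatively $Y$ is isomorphic to a locally closed subset of an affine space defined by the nonvanishing conditions on the free parameters $\lambda_i$. (2) Check the defining conditions of a geometric quotient (in the sense of Mumford, GIT §0.2, Def.\ 0.6): $\pi$ is surjective and its fibers are exactly the $T_{\mathbf{M}}$-orbits — immediate from the trivialization, since fibers of $\operatorname{pr}_1$ on $Y\times T_{\mathbf{M}}$ are the slices $\{y\}\times T_{\mathbf{M}}$, which are the orbits; $\pi$ is submersive (a topological quotient map) — again immediate since $\operatorname{pr}_1$ on a product is a quotient map; and $\mathcal{O}_{\EuScript{F}_{\mathbf{M}}}=(\pi_*\mathcal{O}_{\mathds{F}_{\mathbf{M}}})^{T_{\mathbf{M}}}$, i.e.\ for open $U\subseteq Y$ one has $\mathcal{O}(U)=\mathcal{O}(U\times T_{\mathbf{M}})^{T_{\mathbf{M}}}$, which holds because $\mathcal{O}(U\times T_{\mathbf{M}})=\mathcal{O}(U)\otimes \mathcal{O}(T_{\mathbf{M}})$ and the torus acts only on the second factor, so the invariants of $\mathcal{O}(T_{\mathbf{M}})=\mathbb{C}[T_{\mathbf{M}}]$ under translation are just the constants. (3) Prove \emph{universality}: base change along any morphism $Z\to\EuScript{F}_{\mathbf{M}}$ gives $\mathds{F}_{\mathbf{M}}\times_{\EuScript{F}_{\mathbf{M}}} Z\cong (Y\times T_{\mathbf{M}})\times_Y Z\cong Z\times T_{\mathbf{M}}$ with $T_{\mathbf{M}}$ acting on the second factor, and the same argument as in (2) shows $\operatorname{pr}_1\colon Z\times T_{\mathbf{M}}\to Z$ is a geometric quotient; since the product decomposition is preserved by arbitrary base change this also shows $\pi$ is \emph{universally} submersive. (4) Conclude: a universal geometric quotient is in particular a universal categorical quotient (GIT, Remark after Def.\ 0.6 / Prop.\ 0.1), and affineness was established in (1).

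**Main obstacle.** The only genuine point requiring care is step (1) together with the claim that $\alpha,\beta$ are algebraic, not merely set-theoretic: one must argue that the trivializing section $s$ — equivalently, the rescaling matrices like the $s\in\mathrm{GL}_{d_k}(\mathbb{C})$ and $t\in T_{\mathbf{M}}$ exhibited in the Example — can be chosen with entries that are regular (not just rational) functions on all of $\mathds{F}_{\mathbf{M}}$. This is exactly where the hypothesis "$p_{I_j}\neq 0\Leftrightarrow I_j\in B_{M_j}$" is used: the normalizing denominators are products of Plücker coordinates indexed by bases, hence invertible throughout the cell, so the construction globalizes. Everything downstream (fibers $=$ orbits, submersiveness, invariants of the structure sheaf, stability under base change) is then the essentially formal statement that for a trivial principal bundle $Y\times G\to Y$ with $G$ a torus, the first projection is a universal geometric quotient; I would state and verify this as the core computation and deduce the theorem. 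One should also record that $T_{\mathbf{M}}$ is linearly reductive (being a torus), which is what licenses invoking GIT in the first place, although for the trivial-bundle argument it is not strictly needed.
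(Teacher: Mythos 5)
Your proposal is correct in outline but takes a genuinely different route from the paper. The paper's proof is a three-line appeal to Mumford's GIT: since $\mathds{F}_{\mathbf{M}}$ is affine, $T_{\mathbf{M}}$ is reductive, the base field is algebraically closed, and all orbits are closed, Theorem 1.1 and Amplification 1.3 of \cite[pp.~27, 30]{MumfordGIT94} give the universal geometric quotient and the universal submersiveness of $\pi$ at once, and Proposition 0.1 gives the categorical statement. You instead construct the quotient by hand as the image of the section $s$, identify $\mathds{F}_{\mathbf{M}}$ with the trivial torsor $Y\times T_{\mathbf{M}}$, and verify the axioms of a geometric quotient plus stability under base change directly. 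Your route buys more information --- an explicit affine model of $\EuScript{F}_{\mathbf{M}}$ and an algebraic (not merely topological) trivialization of the bundle \eqref{fibflag} --- while the paper's route buys brevity and robustness, since it never needs the torsor to be trivial, only the orbits to be closed.

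That said, the step you yourself flag as the main obstacle is a genuine gap as written, and it is precisely the step the paper's argument avoids. Asserting that $\alpha$ and $\beta$ are morphisms amounts to asserting that the $T_{\mathbf{M}}$-torsor $\pi$ admits a global \emph{algebraic} section, i.e.\ that the representative $\mathbf{\Lambda}_0$ can be chosen to depend regularly on the orbit for an \emph{arbitrary} plurimatroid $\mathbf{M}$. The paper only establishes this set-theoretically and illustrates it in one example; in general it reduces to showing that the sublattice of the character lattice of $T$ spanned by the differences of the weights $t_{I}$, $I\in B_{M_j}$, is saturated (equivalently, that a suitable collection of ratios of Pl\"ucker coordinates gives a splitting of the weight map), and this requires an argument, not just the observation that the relevant Pl\"ucker coordinates are nonvanishing on the cell. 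If you want to keep your constructive approach, you should either prove this saturation/splitting claim or weaken it to Zariski-local triviality (automatic for torus torsors) and glue; otherwise the cleanest repair is exactly the paper's: invoke GIT Theorem 1.1 and Amplification 1.3, for which closedness of orbits and reductivity of $T_{\mathbf{M}}$ suffice.
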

    
    \begin{proof}
    	Since $\mathds{F}_{\mathbf{M}}$ is affine, $T_{\mathbf{M}}$ is a reductive algebraic group acting on it with all its orbits closed, and $\mathds{C}$ is algebraically closed, the Theorem 1.1 in \cite[Chap. 1, \S2, p. 27]{MFK94} and the Amplification 1.3 in \cite[Chap. 1, \S2, p. 30]{MFK94} guarantee the existence of $\EuScript{F}_{\mathbf{M}}$ as a universal geometric quotient and show that the morphism \eqref{pimor} is universally submersive. Moreover, $\EuScript{F}_{\mathbf{M}}$ is a universal categorical quotient and is unique up to isomorphism by Proposition 0.1 in \cite[Chap. 0, \S2, p. 4]{MFK94}.
    \end{proof}
    
    \begin{remark}
    	The closed points of $\EuScript{F}_{\mathbf{M}}$ are in bijective correspondence with the orbits of $T_{\mathbf{M}}$ on $\mathds{F}_{\mathbf{M}}$.\vspace*{1ex}
    \end{remark}	
    
    We need the next lemma to prove two of the main results of this work, namely Theorem \ref{subvar-ugq} and Theorem \ref{mainthm}.
    
    \begin{lemma}\label{lemclos}
    	Let $X\subset\mathds{F}_{d_1<\cdots<d_k}(\mathds{C}^n)$ be a closed set; then 
    	    \begin{equation*}
    	    	X=\displaystyle\bigcup_{\mathbf{M}\in\EuScript{M}^{d_1,\ldots,d_k}_n}\overline{X\cap\mathds{F}_{\mathbf{M}}}.
    	    \end{equation*}
    	If $X$ is also irreducible, then there exists a unique $\mathbf{M}\in\EuScript{M}_n^{d_1,\ldots,d_k}$ such that $X=\overline{X\cap\mathds{F}_{\mathbf{M}}}$.
    \end{lemma}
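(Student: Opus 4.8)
The plan is to reduce the statement to the disjoint decomposition $\mathds{F}_{d_1<\cdots<d_k}(\mathds{C}^n)=\bigsqcup_{\mathbf{M}\in\EuScript{M}^{d_1,\ldots,d_k}_n}\mathds{F}_{\mathbf{M}}$ of Proposition \ref{fdecomp}, combined with two standard facts: a locally closed subset of a topological space is open in its own closure, and an irreducible space is not a finite union of proper closed subsets (equivalently, any two non-empty open subsets of it intersect).

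For the first identity I would simply intersect \eqref{fmdecomp} with $X$ to obtain $X=\bigsqcup_{\mathbf{M}}(X\cap\mathds{F}_{\mathbf{M}})$, which gives $X\subseteq\bigcup_{\mathbf{M}}\overline{X\cap\mathds{F}_{\mathbf{M}}}$ since each set lies in its closure. For the reverse inclusion, note that $X\cap\mathds{F}_{\mathbf{M}}\subseteq X$ and $X$ is closed, so $\overline{X\cap\mathds{F}_{\mathbf{M}}}\subseteq\overline{X}=X$ for every $\mathbf{M}$; taking the union over $\mathbf{M}$ closes the loop. No finiteness is needed for this part.

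For the irreducible case I would first use that $\EuScript{M}^{d_1,\ldots,d_k}_n$ is finite, so the identity just established presents $X$ as a \emph{finite} union of closed subsets $\overline{X\cap\mathds{F}_{\mathbf{M}}}$; irreducibility of $X$ then forces $X=\overline{X\cap\mathds{F}_{\mathbf{M}}}$ for at least one $\mathbf{M}$, and such a cell satisfies $X\cap\mathds{F}_{\mathbf{M}}\neq\emptyset$. For uniqueness, suppose $X=\overline{X\cap\mathds{F}_{\mathbf{M}}}=\overline{X\cap\mathds{F}_{\mathbf{M}'}}$. The crucial observation is that $X\cap\mathds{F}_{\mathbf{M}}$ is actually \emph{open} in $X$, not merely dense: by Lemma \ref{fmaffine} the cell $\mathds{F}_{\mathbf{M}}$ is locally closed in $\mathds{F}_{d_1<\cdots<d_k}(\mathds{C}^n)$, hence of the form $\mathds{F}_{\mathbf{M}}=V\cap\overline{\mathds{F}_{\mathbf{M}}}$ with $V$ open in the flag variety, and since $X=\overline{X\cap\mathds{F}_{\mathbf{M}}}\subseteq\overline{\mathds{F}_{\mathbf{M}}}$ we get $X\cap\mathds{F}_{\mathbf{M}}=X\cap V\cap\overline{\mathds{F}_{\mathbf{M}}}=X\cap V$, open in $X$; the same applies to $\mathbf{M}'$. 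Two non-empty open subsets of the irreducible space $X$ must meet, whereas $\mathds{F}_{\mathbf{M}}\cap\mathds{F}_{\mathbf{M}'}=\emptyset$ when $\mathbf{M}\neq\mathbf{M}'$ by the disjointness in \eqref{fmdecomp}; hence $\mathbf{M}=\mathbf{M}'$.

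I expect the only genuine obstacle to be the step upgrading "dense" to "open in $X$" for the strata $X\cap\mathds{F}_{\mathbf{M}}$: density alone does not preclude two distinct cells from both being dense in $X$, while openness together with irreducibility and the disjointness of the decomposition does the job. The remaining steps are routine manipulations of closures and the definition of irreducibility.
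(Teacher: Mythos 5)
Your proof is correct, and for the first identity it is essentially the paper's argument: the paper writes $X=\overline{X}=\overline{\bigsqcup_{\mathbf{M}}(X\cap\mathds{F}_{\mathbf{M}})}=\bigcup_{\mathbf{M}}\overline{X\cap\mathds{F}_{\mathbf{M}}}$, using Proposition \ref{fdecomp} and the fact that closure commutes with \emph{finite} unions (the paper's last equality does implicitly use finiteness of $\EuScript{M}^{d_1,\ldots,d_k}_n$, whereas your two-inclusion arrangement avoids it there and defers it to the irreducibility step, where it is genuinely needed).

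Where you go beyond the paper is the second statement: the paper declares it ``immediate'' and gives no argument. Your treatment supplies exactly the missing content. Existence of some $\mathbf{M}$ with $X=\overline{X\cap\mathds{F}_{\mathbf{M}}}$ is indeed the standard consequence of irreducibility applied to a finite closed cover. For uniqueness, you correctly identify the crux: density of two strata $X\cap\mathds{F}_{\mathbf{M}}$ and $X\cap\mathds{F}_{\mathbf{M}'}$ in $X$ does not by itself contradict their disjointness (disjoint dense subsets exist in general), so one must upgrade density to openness. Your derivation --- $\mathds{F}_{\mathbf{M}}=V\cap\overline{\mathds{F}_{\mathbf{M}}}$ by Lemma \ref{fmaffine}, $X\subseteq\overline{\mathds{F}_{\mathbf{M}}}$ from $X=\overline{X\cap\mathds{F}_{\mathbf{M}}}$, hence $X\cap\mathds{F}_{\mathbf{M}}=X\cap V$ is open in $X$ --- is correct, and then two non-empty open subsets of the irreducible $X$ must meet, contradicting the disjointness in \eqref{fmdecomp}. (The only implicit hypothesis is that $X\neq\emptyset$, which is part of irreducibility.) In short: your proof is complete and, on the uniqueness claim, more rigorous than the paper's.
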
	
    
    \begin{proof}
    	Note that
    	    \begin{align*}
    	    	X &=\overline{X}\\ 
    	    	&=\overline{X\cap\mathds{F}_{d_1<\cdots<d_k}(\mathds{C}^n)}\\ 
    	    	&=\overline{X\cap\left(\bigsqcup_{\mathbf{M}\in\EuScript{M}^{d_1,\ldots,d_k}_n}\mathds{F}_{\mathbf{M}}\right)} \tag*{(by Proposition \ref{fdecomp})}\\
    	    	&=\overline{\bigsqcup_{\mathbf{M}\in\EuScript{M}^{d_1,\ldots,d_k}_n}X\cap\mathds{F}_{\mathbf{M}}}\\
    	    	&=\bigcup_{\mathbf{M}\in\EuScript{M}^{d_1,\ldots,d_k}_n}\overline{X\cap\mathds{F}_{\mathbf{M}}}.
    	    \end{align*}
    	The second statement is immediate, which completes the proof of the lemma.
    \end{proof}
    
    At this point, we can now prove that the intersection of a $T$-invariant subvariety with a thin Schubert cell, modulo the action of a subtorus, admits a universal geometric quotient. Moreover, we can show that this subvariety is isomorphic to a fiber product of this geometric quotient with the corresponding thin cell.
    
    \begin{theorem}\label{subvar-ugq}
    	Let $X\subset\mathds{F}_{d_1<\cdots<d_k}(\mathds{C}^n)$ be a $T$-invariant subvariety; then there exists a unique plurimatroid $\mathbf{M}\in\EuScript{M}_n^{d_1,\ldots,d_k}$ such that
    	    \begin{enumerate}
    	    	\item The universal geometric quotient $\mathcal{Y}=\frac{X\cap\mathds{F}_{\mathbf{M}}}{T_{\mathbf{M}}}$ exists, is irreducible, and the morphism
    	    	\begin{align}\label{tilpimor}
    	    		\begin{array}{cccc}
    	    			\widetilde{\pi}:&X\cap\mathds{F}_{\mathbf{M}}&\longrightarrow&\mathcal{Y}\\
    	    			&\mathbf{\Lambda}&\longmapsto&\mathbf{\Lambda}\sbullet T_{\mathbf{M}}
    	    		\end{array}
    	    	\end{align}
    	    	is universally submersive; in particular, $\mathcal{Y}$ is a universal categorical quotient.
    	    	\item $X\cong\overline{\mathcal{Y}\times_{\EuScript{F}_{\mathbf{M}}}\mathds{F}_{\mathbf{M}}}$.
    	    \end{enumerate}
    \end{theorem}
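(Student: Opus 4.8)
The plan is to produce $\mathbf{M}$ from Lemma \ref{lemclos}, run the argument of Theorem \ref{fm-ugq} verbatim on the closed subset $X\cap\mathds{F}_{\mathbf{M}}$ of the affine variety $\mathds{F}_{\mathbf{M}}$, and then use the triviality of the bundle \eqref{fibflag} to identify the fiber product on the nose. First, since a subvariety is by definition closed and irreducible, Lemma \ref{lemclos} supplies a \emph{unique} plurimatroid $\mathbf{M}\in\EuScript{M}_n^{d_1,\ldots,d_k}$ with $X=\overline{X\cap\mathds{F}_{\mathbf{M}}}$; this is the $\mathbf{M}$ of the statement, and its uniqueness is precisely the uniqueness given by the lemma.

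\textbf{Part 1.} Write $Z:=X\cap\mathds{F}_{\mathbf{M}}$. Since $\mathds{F}_{\mathbf{M}}$ is affine and locally closed in $\mathds{F}_{d_1<\cdots<d_k}(\mathds{C}^n)$ by Lemma \ref{fmaffine}, and $X=\overline{Z}$ is closed, $Z$ is a closed, hence affine, subset of $\mathds{F}_{\mathbf{M}}$; writing $\mathds{F}_{\mathbf{M}}=\overline{\mathds{F}_{\mathbf{M}}}\cap U$ with $U$ open and using $X\subseteq\overline{\mathds{F}_{\mathbf{M}}}$, I get $Z=X\cap U$, an open subset of the irreducible variety $X$, so $Z$ is nonempty and irreducible. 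As $X$ and $\mathds{F}_{\mathbf{M}}$ are both $T$-invariant, $Z$ is $T$-invariant and the action factors through the subtorus $T_{\mathbf{M}}$ of \eqref{subtorflag}; since every $T_{\mathbf{M}}$-orbit in $\mathds{F}_{\mathbf{M}}$ is closed and $Z$ is closed in $\mathds{F}_{\mathbf{M}}$, every $T_{\mathbf{M}}$-orbit in $Z$ is closed in $Z$. Then Theorem~1.1 and Amplification~1.3 of \cite{MumfordGIT94}, applied exactly as in the proof of Theorem \ref{fm-ugq} but to $Z$, give that $\mathcal{Y}=\nicefrac{Z}{T_{\mathbf{M}}}$ exists as a universal geometric quotient, is affine, and that $\widetilde{\pi}$ is universally submersive; Proposition~0.1 of \cite{MumfordGIT94} then makes it a universal categorical quotient, unique up to isomorphism. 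Finally $\mathcal{Y}$ is irreducible, being the image of the irreducible $Z$ under the surjection $\widetilde{\pi}$.

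\textbf{Part 2.} The closed $T_{\mathbf{M}}$-equivariant immersion $Z\hookrightarrow\mathds{F}_{\mathbf{M}}$ induces a morphism $\mathcal{Y}\to\EuScript{F}_{\mathbf{M}}$, which on coordinate rings is $\mathcal{O}(\mathds{F}_{\mathbf{M}})^{T_{\mathbf{M}}}\to\mathcal{O}(Z)^{T_{\mathbf{M}}}$; this is surjective because taking $T_{\mathbf{M}}$-invariants is exact for the reductive group $T_{\mathbf{M}}$, so $\mathcal{Y}\to\EuScript{F}_{\mathbf{M}}$ is a closed immersion and $\mathcal{Y}$ may be viewed as a closed subvariety of $\EuScript{F}_{\mathbf{M}}$. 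Using the trivialization established right before the Example — an isomorphism $\mathds{F}_{\mathbf{M}}\cong\EuScript{F}_{\mathbf{M}}\times T_{\mathbf{M}}$ under which $\pi$ is the first projection — restriction over $\mathcal{Y}$ gives $\pi^{-1}(\mathcal{Y})\cong\mathcal{Y}\times T_{\mathbf{M}}$, and since $Z$ is $T_{\mathbf{M}}$-saturated, $\pi^{-1}(\mathcal{Y})=\pi^{-1}(\widetilde{\pi}(Z))=Z$. Hence there are compatible isomorphisms
\[
\mathcal{Y}\times_{\EuScript{F}_{\mathbf{M}}}\mathds{F}_{\mathbf{M}}\;\cong\;\pi^{-1}(\mathcal{Y})\;=\;Z\;=\;X\cap\mathds{F}_{\mathbf{M}},
\]
compatible with the projections into $\mathds{F}_{\mathbf{M}}\hookrightarrow\mathds{F}_{d_1<\cdots<d_k}(\mathds{C}^n)$. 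Taking closures there and invoking the choice of $\mathbf{M}$ yields $\overline{\mathcal{Y}\times_{\EuScript{F}_{\mathbf{M}}}\mathds{F}_{\mathbf{M}}}=\overline{X\cap\mathds{F}_{\mathbf{M}}}=X$, which is assertion (2).

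\textbf{Main obstacle.} Everything in Part 1 is a faithful transcription of Theorem \ref{fm-ugq} once one checks $Z$ is affine, irreducible, and $T_{\mathbf{M}}$-invariant with closed orbits, so the real work is in Part 2: making precise that $\mathcal{Y}\hookrightarrow\EuScript{F}_{\mathbf{M}}$ is a closed immersion and that the natural square is Cartesian \emph{as schemes} (not just set-theoretically), i.e. that the fiber product equals $Z$ on the nose. I expect this to rest on two facts — exactness of the invariants functor for the reductive torus $T_{\mathbf{M}}$, and the explicit global trivialization $\mathds{F}_{\mathbf{M}}\cong\EuScript{F}_{\mathbf{M}}\times T_{\mathbf{M}}$, which restricts to $Z\cong\mathcal{Y}\times T_{\mathbf{M}}$ and makes the base change transparent — with a small verification that the scheme-theoretic preimage $\pi^{-1}(\mathcal{Y})$ is reduced and coincides with $Z$.
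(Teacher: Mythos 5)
Your proposal is correct, and Part~1 follows the paper's route closely, with the advantage that your justification for why $Z=X\cap\mathds{F}_{\mathbf{M}}$ is affine is the right one (it is a closed subset of the affine $\mathds{F}_{\mathbf{M}}$); the paper's phrasing at that step inadvertently suggests $X$ itself is affine, which is generally false for a closed subvariety of a projective flag variety, even though the conclusion is still correct. Part~2, however, is a genuinely different argument from the one in the paper. The paper verifies the universal property of the fiber product directly: given test morphisms $\phi_1\colon Z\to\mathcal{Y}$ and $\phi_2\colon Z\to\mathds{F}_{\mathbf{M}}$ satisfying $\pi\circ\phi_2=i_1\circ\phi_1$, it observes that $\phi_2$ already lands in $X\cap\mathds{F}_{\mathbf{M}}$ (because that set is $T$-saturated) and thus furnishes the unique factoring morphism $u=\phi_2$. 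Your approach instead exhibits $\mathcal{Y}\hookrightarrow\EuScript{F}_{\mathbf{M}}$ as a closed immersion via exactness of the $T_{\mathbf{M}}$-invariants functor, then exploits the global trivialization $\mathds{F}_{\mathbf{M}}\cong\EuScript{F}_{\mathbf{M}}\times T_{\mathbf{M}}$ established just before the example in Section~3 to compute the base change of $\pi$ over $\mathcal{Y}$ on the nose, obtaining $\pi^{-1}(\mathcal{Y})\cong\mathcal{Y}\times T_{\mathbf{M}}=Z$. What the paper's argument buys is brevity and independence from the trivialization; what your argument buys is that it works manifestly at the level of schemes rather than of underlying sets, making the scheme-theoretic equality $\pi^{-1}(\mathcal{Y})=X\cap\mathds{F}_{\mathbf{M}}$ (which the paper leaves implicit) transparent. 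Both arguments are sound, and both correctly conclude by taking closures and invoking $X=\overline{X\cap\mathds{F}_{\mathbf{M}}}$ from Lemma~\ref{lemclos}.
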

    
    \begin{proof}
    	First, since $X$ and $\mathds{F}_{\mathbf{M}}$ are $T$-invariant and affine, so is $X\cap\mathds{F}_{\mathbf{M}}$; second, $X\cap\mathds{F}_{\mathbf{M}}\subset\mathds{F}_{\mathbf{M}}$ implies that all points of $X\cap\mathds{F}_{\mathbf{M}}$ have the same stabilizer in $T$, so $T_{\mathbf{M}}=\faktor{T}{\mathrm{Stab}_T(X\cap\mathds{F}_{\mathbf{M}})}$ acts freely on $X\cap\mathds{F}_{\mathbf{M}}$ with all its orbits closed. Theorem 1.1 in \cite[Chap. 1, \S2, p. 27]{MFK94} together with Amplification 1.3 in \cite[Chap. 1, \S2, p. 30]{MFK94} guarantee the existence of the universal geometric quotient and show that the morphism \eqref{tilpimor} is universally submersive. Moreover, $\mathcal{Y}$ is a universal categorical quotient, and by Proposition 0.1 in \cite[Chap. 0, \S2, p. 4]{MFK94} it is unique up to isomorphism. Since the orbits of $T$ and $T_{\mathbf{M}}$ coincide, we have the fiber bundle
    	    \begin{equation*}
    	    	T_{\mathbf{M}}\longrightarrow X\cap\mathds{F}_{\mathbf{M}}\stackrel{\widetilde{\pi}}{\longrightarrow}\mathcal{Y},
    	    \end{equation*}
    	where $\widetilde{\pi}={\pi\bigr|}_{X\cap\mathds{F}_{\mathbf{M}}}$, with $\pi$ defined as \eqref{pimor}. Furthermore, since $X$ is irreducible, by Lemma \ref{lemclos} there exists a unique $\mathbf{M}\in\EuScript{M}^{d_1,\ldots,d_k}_n$ such that $X=\overline{X\cap\mathds{F}_{\mathbf{M}}}$, it follows that  $X\cap\mathds{F}_{\mathbf{M}}\subset X$ is dense and hence irreducible, since $X$ is. Consequently, the quotient $\mathcal{Y}=\frac{X\cap\mathds{F}_{\mathbf{M}}}{T_{\mathbf{M}}}$ is also irreducible.				
    	
    	Consider the commutative diagram
    	    \begin{equation*}
    	    	\xymatrix{
    	    		X\cap\mathds{F}_{\mathbf{M}}\ar[d]_{\widetilde{\pi}}\,\ar@{^(->}[r]^{\quad i_2} & \mathds{F}_{\mathbf{M}}\ar[d]^{\pi}\\
    	    		\mathcal{Y}\,\ar@{^(->}[r]^{i_1} & \EuScript{F}_{\mathbf{M}}
    	    	}
    	    \end{equation*}
    	where $i_1,i_2$ are the canonical inclusions and $\widetilde{\pi}={\pi\bigr|}_{X\cap\mathds{F}_{\mathbf{M}}}$. Now, since the fiber product $\mathcal{Y}\times_{\EuScript{F}_{\mathbf{M}}}\mathds{F}_{\mathbf{M}}$ is unique up to isomorphism, we will prove that $X\cap\mathds{F}_{\mathbf{M}}$ satisfies the universal property: if $\phi_1:Z\longrightarrow\mathcal{Y}$ and $\phi_2:Z\longrightarrow\mathds{F}_{\mathbf{M}}$ are morphisms such that $\pi\circ\phi_2=i_1\circ\phi_1$, then there exists a unique morphism $u:Z\longrightarrow X\cap\mathds{F}_{\mathbf{M}}$ that makes the diagram commute
    	    \begin{equation}\label{fibcom}
    	    	\begin{split}
    	    		\xymatrix{
    	    			Z \ar@/_1pc/[ddr]_{\phi_1} \ar@/^1pc/[drr]^{\phi_2} \ar@{-->}[dr]^{u}\\
    	    			&X\cap\mathds{F}_{\mathbf{M}}\ar[d]_{\widetilde{\pi}}\,\ar@{^(->}[r]^{\quad i_2} & \mathds{F}_{\mathbf{M}}\ar[d]^{\pi}\\
    	    			&\mathcal{Y}\,\ar@{^(->}[r]^{i_1} & \EuScript{F}_{\mathbf{M}}
    	    		}
    	    	\end{split}
    	    \end{equation}
    	
    	Let $z\in Z$, then
    	    \begin{equation*}
    	    	(\pi\circ\phi_2)(z)=(i_1\circ\phi_1)(z)\ \Longrightarrow\ \pi\big(\phi_2(z)\big)=i_1\big(\phi_1(z)\big)\ \Longrightarrow\ \phi_2(z)\sbullet T_{\mathbf{M}}=\phi_1(z)\in\mathcal{Y},
    	    \end{equation*}
    	so there exists $x\in X\cap\mathds{F}_{\mathbf{M}}$ such that $\phi_1(z)=x\sbullet T_{\mathbf{M}}$ because it is $T$-invariant. Thus, we can define $u:=\phi_2$, which by definition satisfies $i_2\circ u=\phi_2$.
    	
    	Now, observe that
    	    \begin{eqnarray*}
    	    	(\widetilde{\pi}\circ u)(z)&=&\widetilde{\pi}\big(u(z)\big)\\
    	    	&=&\widetilde{\pi}\big(\phi_2(z)\big)\\
    	    	&=&\pi\big(\phi_2(z)\big)\\
    	    	&=&(\pi\circ\phi_2)(z)\\
    	    	&=&(i_1\circ\phi_1)(z)\\
    	    	&=&i_1\big(\phi_1(z)\big)\\
    	    	&=&\phi_1(z),
    	    \end{eqnarray*}
    	i.e., $\widetilde{\pi}\circ u=\phi_1$ proving that the diagram \eqref{fibcom} commutes. Finally, to verify the uniqueness, suppose there exists $u':Z\longrightarrow X\cap\mathds{F}_{\mathbf{M}}$ such that $i_2\circ u'=\phi_2$ and let $z\in Z$ be arbitrary. Then
    	    \begin{equation*}
    	    	(i_2\circ u')(z)=\phi_2(z)=(i_2\circ u)(z)\quad \Longrightarrow\quad i_2\big(u'(z)\big)=i_2\big(u(z)\big)\quad \Longrightarrow\quad u'(z)= u(z),
    	    \end{equation*}
    	from which $u'=u$.
    	
    	Thus, $X\cap\mathds{F}_{\mathbf{M}}\cong\mathcal{Y} \times_{\EuScript{F}_{\mathbf{M}}}\mathds{F}_{\mathbf{M}}$, and hence $X=\overline{X\cap\mathds{F}_{\mathbf{M}}}\cong\overline{\mathcal{Y} \times_{\EuScript{F}_{\mathbf{M}}}\mathds{F}_{\mathbf{M}}}$.
    \end{proof}
    
    If instead of considering a $T$-invariant subvariety, we consider a scheme-theoretic point, we obtain the next result.
    
    \begin{proposition}\label{schemepoint}
    	Let $\eta\in\EuScript{F}_{\mathbf{M}}$ be a scheme-theoretic point, then $\overline{\overline{\{\eta\}}\times_{\EuScript{F}_{\mathbf{M}}}\mathds{F}_{\mathbf{M}}}$ is $T$-invariant and irreducible.
    \end{proposition}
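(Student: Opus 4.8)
Write $Z:=\overline{\{\eta\}}$ for the closure of $\eta$ in $\EuScript{F}_{\mathbf{M}}$ endowed with its reduced induced structure; then $Z$ is an integral closed subscheme of $\EuScript{F}_{\mathbf{M}}$ with generic point $\eta$, and in particular $Z$ is irreducible. Set $W:=Z\times_{\EuScript{F}_{\mathbf{M}}}\mathds{F}_{\mathbf{M}}$, the fibre product formed along the closed immersion $Z\hookrightarrow\EuScript{F}_{\mathbf{M}}$ and the quotient morphism $\pi\colon\mathds{F}_{\mathbf{M}}\to\EuScript{F}_{\mathbf{M}}$ of Theorem \ref{fm-ugq}. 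Since a base change of a closed immersion is again a closed immersion, $W$ is a closed subscheme of $\mathds{F}_{\mathbf{M}}$ whose underlying set is $\pi^{-1}(Z)$, and the object to be understood is its closure $\overline{W}$ taken inside $\mathds{F}_{d_1<\cdots<d_k}(\mathds{C}^n)$, exactly as in Theorem \ref{subvar-ugq}\textup{(2)}. The plan is to deduce $T$-invariance from the fact that the fibres of $\pi$ are precisely the $T$-orbits, and to deduce irreducibility by identifying $W$ with a product $Z\times T_{\mathbf{M}}$ of two irreducible $\mathds{C}$-varieties by means of the global trivialization of the bundle \eqref{fibflag}.

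For the $T$-invariance, I would first recall from the discussion following Proposition \ref{fdecomp} that $\mathbf{\Lambda}\sbullet T_{\mathbf{M}}=\mathbf{\Lambda}\sbullet T$ for every $\mathbf{\Lambda}\in\mathds{F}_{\mathbf{M}}$, so that $\pi(\mathbf{\Lambda}\sbullet g)=\pi(\mathbf{\Lambda})$ for all $g\in T$; that is, $\pi$ is constant on $T$-orbits. Hence $\pi^{-1}(Z)$ --- the underlying set of $W$ --- is $T$-stable. Since $T$ acts on $\mathds{F}_{d_1<\cdots<d_k}(\mathds{C}^n)$ by automorphisms, hence by homeomorphisms for the Zariski topology, it follows that $g\sbullet\overline{W}=\overline{g\sbullet W}=\overline{W}$ for every $g\in T$, and therefore $\overline{W}$ is $T$-invariant.

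For the irreducibility, I would invoke the global trivialization $\mathds{F}_{\mathbf{M}}\cong\frac{\mathds{F}_{\mathbf{M}}}{T_{\mathbf{M}}}\times T_{\mathbf{M}}$ given by the mutually inverse morphisms $\alpha,\beta$ accompanying \eqref{fibflag}, which is an isomorphism of schemes over $\EuScript{F}_{\mathbf{M}}$ intertwining $\pi$ with the first projection. Base-changing this isomorphism along $Z\hookrightarrow\EuScript{F}_{\mathbf{M}}$ yields $W\cong Z\times_{\mathds{C}}T_{\mathbf{M}}$. Now $Z$ is irreducible and the torus $T_{\mathbf{M}}$ is irreducible, and both are geometrically irreducible over the algebraically closed field $\mathds{C}$; hence $Z\times_{\mathds{C}}T_{\mathbf{M}}$ is irreducible, so $W$ is irreducible, and therefore so is its closure $\overline{W}$. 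This proves the proposition.

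I expect the one point requiring care to be the assertion that the trivialization of \eqref{fibflag} is an isomorphism of $\EuScript{F}_{\mathbf{M}}$-schemes --- and not merely a homeomorphism --- so that base change along $Z\hookrightarrow\EuScript{F}_{\mathbf{M}}$ is legitimate. If one prefers to avoid this, the same conclusion follows from the fact that $\pi$ is a geometric quotient by the connected group $T_{\mathbf{M}}$, hence an open surjection all of whose fibres are isomorphic to $T_{\mathbf{M}}$; then $\pi^{-1}(Z)\to Z$ is an open surjection onto an irreducible scheme with irreducible equidimensional fibres, which forces $\pi^{-1}(Z)$ to be irreducible. It is worth noting that at no stage does the argument require $\mathds{F}_{\mathbf{M}}$ or $\EuScript{F}_{\mathbf{M}}$ to be irreducible.
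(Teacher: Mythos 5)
Your argument is correct, and both halves run on the same two pillars as the paper's proof: the fibres of $\pi$ are exactly the $T$-orbits (giving $T$-invariance of $\pi^{-1}(\overline{\{\eta\}})$ and hence of its closure), and the trivialization $\mathds{F}_{\mathbf{M}}\cong\EuScript{F}_{\mathbf{M}}\times T_{\mathbf{M}}$ (giving irreducibility). The only real divergence is in how the trivialization is exploited for irreducibility. The paper uses it indirectly: it deduces that $\pi$ is flat, that flatness persists under base change to $\pi_1\colon\overline{\{\eta\}}\times_{\EuScript{F}_{\mathbf{M}}}\mathds{F}_{\mathbf{M}}\to\overline{\{\eta\}}$, that the fibres are irreducible and equidimensional (each isomorphic to $T_{\mathbf{M}}$), and then invokes Hartshorne III.9.6 for pure-dimensionality and a proposition of Musta\c t\u{a} for irreducibility. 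You instead base-change the trivialization itself along $\overline{\{\eta\}}\hookrightarrow\EuScript{F}_{\mathbf{M}}$ to identify the fibre product with $\overline{\{\eta\}}\times_{\mathds{C}}T_{\mathbf{M}}$, a product of (geometrically) irreducible varieties over $\mathds{C}$, which is shorter and avoids the external citations; your fallback argument (open surjection onto an irreducible base with irreducible equidimensional fibres) is essentially the route the paper does take. You are also right to flag that the delicate point is whether the trivialization is an isomorphism of schemes over $\EuScript{F}_{\mathbf{M}}$ rather than merely a homeomorphism --- the paper's construction of $\alpha$ and $\beta$ is stated only as topological triviality, yet its own proof also leans on this isomorphism to conclude flatness of $\pi$, so your proof assumes nothing beyond what the paper already assumes.
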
     
        
    \begin{proof}
    	Consider the commutative diagram
    	    \begin{equation*}
    	    	\begin{tikzcd}
    	    		T_{\mathbf{M}}\ar[r] & \overline{\{\eta\}}\times_{\EuScript{F}_{\mathbf{M}}}\mathds{F}_{\mathbf{M}}\ar{d}{\pi_2}\ar{r}{\pi_1} & \overline{\{\eta\}}\ar[hook]{d}{i}\\
    	    		T_{\mathbf{M}}\ar[r] & \mathds{F}_{\mathbf{M}}\ar{r}{\pi} & \EuScript{F}_{\mathbf{M}} 
    	    	\end{tikzcd}
    	    \end{equation*}
    	
    	Note that
    	    \begin{eqnarray*}
    	    	\overline{\{\eta\}}\times_{\EuScript{F}_{\mathbf{M}}}\mathds{F}_{\mathbf{M}}&=&\left\{(y,\Lambda)\in\overline{\{\eta\}}\times\mathds{F}_{\mathbf{M}}\mid\pi(\Lambda)=i(y)\right\}\\
    	    	&=&\left\{(y,\Lambda)\in\overline{\{\eta\}}\times\mathds{F}_{\mathbf{M}}\mid\pi(\Lambda)=y\right\}\\
    	    	&=&\left\{(y,\Lambda)\in\overline{\{\eta\}}\times\mathds{F}_{\mathbf{M}}\mid\Lambda\in\pi^{-1}(y)\right\}\\
    	    	&=&\bigcup_{y\in\overline{\{\eta\}}}\left(\{x\}\times\pi^{-1}(y)\right)\\
    	    	&\cong&\bigcup_{y\in\overline{\{\eta\}}}\pi^{-1}(y),
    	    \end{eqnarray*}
    	from which it follows that $\overline{\{\eta\}}\times_{\EuScript{F}_{\mathbf{M}}}\mathds{F}_{\mathbf{M}}$ is a set of orbits parametrized by $\overline{\{\eta\}}$, implying that $\overline{\{\eta\}}\times_{\EuScript{F}_{\mathbf{M}}}\mathds{F}_{\mathbf{M}}$ is $T$-invariant. Consequently, $\overline{\overline{\{\eta\}}\times_{\EuScript{F}_{\mathbf{M}}}\mathds{F}_{\mathbf{M}}}$ is also $T$-invariant. Since $\mathds{F}_{\mathbf{M}}\cong\EuScript{F}_{\mathbf{M}}\times T_{\mathbf{M}}$, the morphism $\pi$ is flat. As flatness is stable under base change, it follows that $\pi_1$ is flat as well. Consequently, all the fibers of $\pi_1$ are equidimensional and irreducible, since they are isomorphic to $T_{\mathbf{M}}$. By \cite[Chap. III, Cor. 9.6, p. 257]{Har77}, it follows that $\overline{\{\eta\}}\times_{\EuScript{F}_{\mathbf{M}}}\mathds{F}_{\mathbf{M}}$ is pure-dimensional, and hence, by part a) of Musta\c t\u{a}'s Proposition in \cite[p. 1]{Mus09}, it is irreducible. Therefore, $\overline{\overline{\{\eta\}}\times_{\EuScript{F}_{\mathbf{M}}}\mathds{F}_{\mathbf{M}}}$ is also irreducible.
    \end{proof}
    
\subsection{Homology of $T$-equivariant subvarieties}\label{subsec4.1}
    Let $\mathds{F}_{d_1<\cdots<d_k}(\mathds{C}^n)^T$ be the set consisting of scheme-theoretic points of $\mathds{F}_{d_1<\cdots<d_k}(\mathds{C}^n)$ which are $T$-invariant. Let $\mathbf{M}\in\EuScript{M}^{d_1,\ldots,d_k}_n$ be a plurimatroid and consider the map
        \begin{equation*}
        	\begin{array}{cccc}
        		\mathds{V}_{\mathbf{M}}:&\EuScript{F}_{\mathbf{M}}&\longrightarrow&\mathds{F}_{d_1<\cdots<d_k}(\mathds{C}^n)^T\\
        		&y&\longmapsto&\eta
        	\end{array},
        \end{equation*}
    where $\overline{\{\eta\}}=\overline{\pi^{-1}(Y)}$ with $Y=\overline{\{y\}}$. The domain of this map can be extended to the disjoint union of the geometric quotients $\EuScript{F}_{\mathbf{M}}$'s, as we formally state below.
    
    \begin{theorem}\label{mainthm}
    	The map
    	    \begin{equation}\label{surjmap}
    	    	\begin{array}{ccc}
    	    		\displaystyle\bigsqcup_{\mathbf{M}\in\EuScript{M}^{d_1,\ldots,d_k}_n}\EuScript{F}_{\mathbf{M}}&\stackrel{\bigsqcup\mathds{V}_{\mathbf{M}}}{\longrightarrow}&\mathds{F}_{d_1<\cdots<d_k}(\mathds{C}^n)^T
    	    	\end{array}
    	    \end{equation}
    	is surjective.
    \end{theorem}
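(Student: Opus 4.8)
The plan is to show surjectivity directly: given an arbitrary $T$-invariant scheme-theoretic point $\eta\in\mathds{F}_{d_1<\cdots<d_k}(\mathds{C}^n)^T$, produce a plurimatroid $\mathbf{M}$ and a point $y\in\EuScript{F}_{\mathbf{M}}$ with $\mathds{V}_{\mathbf{M}}(y)=\eta$. First I would pass to the reduced irreducible closed subvariety $X:=\overline{\{\eta\}}\subset\mathds{F}_{d_1<\cdots<d_k}(\mathds{C}^n)$. Since $\eta$ is $T$-invariant, its closure $X$ is a $T$-invariant subvariety (the closure of a $T$-stable subset is $T$-stable, as $T$ acts by automorphisms). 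Thus $X$ falls under the hypotheses of Theorem \ref{subvar-ugq}, which supplies a \emph{unique} plurimatroid $\mathbf{M}\in\EuScript{M}_n^{d_1,\ldots,d_k}$ such that $X=\overline{X\cap\mathds{F}_{\mathbf{M}}}$, the quotient $\mathcal{Y}=\frac{X\cap\mathds{F}_{\mathbf{M}}}{T_{\mathbf{M}}}$ exists and is irreducible, sits inside $\EuScript{F}_{\mathbf{M}}$ via the inclusion $i_1$, and satisfies $X\cong\overline{\mathcal{Y}\times_{\EuScript{F}_{\mathbf{M}}}\mathds{F}_{\mathbf{M}}}$.

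Next I would identify the point $y$. Since $\mathcal{Y}$ is irreducible, it has a generic point; let $y\in\mathcal{Y}\subset\EuScript{F}_{\mathbf{M}}$ be that generic point, so $Y:=\overline{\{y\}}=\mathcal{Y}$ (closure taken inside $\EuScript{F}_{\mathbf{M}}$, using that $\mathcal{Y}$ is locally closed there, or more carefully $\overline{\{y\}}\cap\mathcal{Y}=\mathcal{Y}$). By the defining formula for $\mathds{V}_{\mathbf{M}}$ one has $\mathds{V}_{\mathbf{M}}(y)=\eta'$ where $\overline{\{\eta'\}}=\overline{\pi^{-1}(Y)}=\overline{\pi^{-1}(\mathcal{Y})}$. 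It remains to check $\overline{\pi^{-1}(\mathcal{Y})}=X$. By Theorem \ref{subvar-ugq}(1), $\widetilde{\pi}=\pi|_{X\cap\mathds{F}_{\mathbf{M}}}$ is the quotient map onto $\mathcal{Y}$, so $\pi^{-1}(\mathcal{Y})\supseteq\widetilde{\pi}^{-1}(\mathcal{Y})=X\cap\mathds{F}_{\mathbf{M}}$; conversely $\pi^{-1}(\mathcal{Y})$ is a union of $T_{\mathbf{M}}$-orbits each mapping into $\mathcal{Y}$, and (by the orbit–point bijection for the geometric quotient, cf.\ the Remark after Theorem \ref{fm-ugq}) $\pi^{-1}(\mathcal{Y})$ is precisely the union of the orbits corresponding to closed points of $\mathcal{Y}$, which is exactly $X\cap\mathds{F}_{\mathbf{M}}$ since that set is $T$-invariant with quotient $\mathcal{Y}$. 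Taking closures, $\overline{\pi^{-1}(\mathcal{Y})}=\overline{X\cap\mathds{F}_{\mathbf{M}}}=X=\overline{\{\eta\}}$, hence $\eta'=\eta$ and $\mathds{V}_{\mathbf{M}}(y)=\eta$, establishing surjectivity.

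I would also need to dispatch the degenerate possibility that $\eta$ is a closed point (or, more generally, that $\mathcal{Y}$ is a single closed point), but that is covered by the same argument with $y$ the unique point of the irreducible $\mathcal{Y}$; the orbit–point correspondence then gives $\pi^{-1}(Y)=X\cap\mathds{F}_{\mathbf{M}}$ directly. The main obstacle I anticipate is pinning down the bookkeeping between scheme-theoretic points and their closures so that the equality $\overline{\pi^{-1}(Y)}=X$ is genuinely watertight — in particular making precise that $\pi$, being the geometric quotient map, pulls back the generic point of $\mathcal{Y}$ to (the generic point of) a dense $T_{\mathbf{M}}$-invariant subset of $X\cap\mathds{F}_{\mathbf{M}}$, and that taking closure in $\mathds{F}_{d_1<\cdots<d_k}(\mathds{C}^n)$ recovers all of $X$. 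This is where Theorem \ref{subvar-ugq}, Proposition \ref{fdecomp}, and the uniqueness clause of Lemma \ref{lemclos} do the real work; everything else is formal.
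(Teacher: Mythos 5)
Your proposal is correct and follows essentially the same route as the paper: pass to $X=\overline{\{\eta\}}$, invoke the uniqueness clause of Lemma \ref{lemclos} (which you access via Theorem \ref{subvar-ugq}, whose first step is exactly that lemma) to get the unique $\mathbf{M}$ with $X=\overline{X\cap\mathds{F}_{\mathbf{M}}}$, take $y$ to be the generic point of $Y=\pi(X\cap\mathds{F}_{\mathbf{M}})$, and conclude via the saturation $\pi^{-1}\big(\pi(X\cap\mathds{F}_{\mathbf{M}})\big)=X\cap\mathds{F}_{\mathbf{M}}$ that $\overline{\pi^{-1}(Y)}=X$. The saturation argument you give (orbits of $T_{\mathbf{M}}$ and $T$ coincide, and $X\cap\mathds{F}_{\mathbf{M}}$ is $T$-invariant) is precisely the paper's justification that $X\cap\mathds{F}_{\mathbf{M}}$ is a ``relative $T$-invariant closed set.''
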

    
    \begin{proof}
    	Let $\eta\in\mathds{F}_{d_1<\cdots<d_k}(\mathds{C}^n)^T$. Then, $X=\overline{\{\eta\}}$ is $T$-invariant and irreducible, so by Lemma \ref{lemclos}, there exists a unique plurimatroid $\mathbf{M}\in\EuScript{M}^{d_1,\ldots,d_k}_n$ such that $X=\overline{X\cap\mathds{F}_{\mathbf{M}}}$. Let $Y=\pi(X\cap\mathds{F}_{\mathbf{M}})\in\EuScript{F}_{\mathbf{M}}$ and $y$ its generic point; we have
    	    \begin{equation*}
    	    	\overline{\pi^{-1}\left(\overline{\{y\}}\right)}=\overline{\pi^{-1}(Y)}=\overline{\pi^{-1}\big(\pi(X\cap\mathds{F}_{\mathbf{M}})\big)}=\overline{X\cap\mathds{F}_{\mathbf{M}}}=X=\overline{\{\eta\}},
    	    \end{equation*}
    	where the third equality holds because $X\cap\mathds{F}_{\mathbf{M}}$ is a relative $T$-invariant closed set. Hence, $\mathds{V}_{\mathbf{M}}(y)=\eta$ and the map \eqref{surjmap} is surjective, as required.
    \end{proof}
    
    \begin{corollary}\label{vmdecomp}
    	The following decomposition holds:
    	    \begin{equation}
    	    	\mathds{F}_{d_1<\cdots<d_k}(\mathds{C}^n)^T=\bigsqcup_{\mathbf{M}\in\EuScript{M}^{d_1,\ldots,d_k}_n}\mathds{V}_{\mathbf{M}}(\EuScript{F}_{\mathbf{M}}).
    	    \end{equation}
    \end{corollary}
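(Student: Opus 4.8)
The plan is to derive the decomposition directly from two already-established facts: the surjectivity of the map \eqref{surjmap}, proved in Theorem \ref{mainthm}, and the uniqueness clause of Lemma \ref{lemclos}. Surjectivity immediately gives the set-theoretic equality $\mathds{F}_{d_1<\cdots<d_k}(\mathds{C}^n)^T=\bigcup_{\mathbf{M}\in\EuScript{M}^{d_1,\ldots,d_k}_n}\mathds{V}_{\mathbf{M}}(\EuScript{F}_{\mathbf{M}})$ (the inclusion $\supseteq$ being trivial, since each $\mathds{V}_{\mathbf{M}}$ lands in $\mathds{F}_{d_1<\cdots<d_k}(\mathds{C}^n)^T$), so the entire content of the corollary reduces to showing that the images $\mathds{V}_{\mathbf{M}}(\EuScript{F}_{\mathbf{M}})$ are pairwise disjoint; equivalently, that the plurimatroid index $\mathbf{M}$ is uniquely determined by each $\eta\in\mathds{F}_{d_1<\cdots<d_k}(\mathds{C}^n)^T$.

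For disjointness I would argue as follows. Suppose $\eta\in\mathds{V}_{\mathbf{M}}(\EuScript{F}_{\mathbf{M}})$, say $\eta=\mathds{V}_{\mathbf{M}}(y)$ with $y\in\EuScript{F}_{\mathbf{M}}$; by the very definition of $\mathds{V}_{\mathbf{M}}$ this means $\overline{\{\eta\}}=\overline{\pi^{-1}(Y)}$, where $Y=\overline{\{y\}}\subseteq\EuScript{F}_{\mathbf{M}}$ and $\pi\colon\mathds{F}_{\mathbf{M}}\to\EuScript{F}_{\mathbf{M}}$ is the quotient morphism \eqref{pimor}. Since $\pi^{-1}(Y)\subseteq\mathds{F}_{\mathbf{M}}$ and $\pi^{-1}(Y)\subseteq\overline{\{\eta\}}$, we get $\pi^{-1}(Y)\subseteq\overline{\{\eta\}}\cap\mathds{F}_{\mathbf{M}}$, and hence
\[
\overline{\{\eta\}}=\overline{\pi^{-1}(Y)}\subseteq\overline{\overline{\{\eta\}}\cap\mathds{F}_{\mathbf{M}}}\subseteq\overline{\{\eta\}},
\]
so that, writing $X:=\overline{\{\eta\}}$, we have $X=\overline{X\cap\mathds{F}_{\mathbf{M}}}$. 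As $\eta$ is a $T$-invariant scheme-theoretic point of the flag variety, $X$ is a $T$-invariant irreducible closed subset, so the uniqueness part of Lemma \ref{lemclos} identifies $\mathbf{M}$ as \emph{the} unique plurimatroid with $X=\overline{X\cap\mathds{F}_{\mathbf{M}}}$. Therefore, if $\eta$ also belonged to $\mathds{V}_{\mathbf{M}'}(\EuScript{F}_{\mathbf{M}'})$, the identical computation would give $X=\overline{X\cap\mathds{F}_{\mathbf{M}'}}$, and uniqueness forces $\mathbf{M}'=\mathbf{M}$. Combining this with the exhaustion from Theorem \ref{mainthm} yields $\mathds{F}_{d_1<\cdots<d_k}(\mathds{C}^n)^T=\bigsqcup_{\mathbf{M}}\mathds{V}_{\mathbf{M}}(\EuScript{F}_{\mathbf{M}})$.

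I do not anticipate a serious obstacle: the corollary is essentially a repackaging of Theorem \ref{mainthm} (exhaustion) and the uniqueness clause of Lemma \ref{lemclos} (the partition property). The one point that genuinely needs the short computation above is the passage from "$\eta\in\mathds{V}_{\mathbf{M}}(\EuScript{F}_{\mathbf{M}})$" to "$\overline{\{\eta\}}=\overline{\overline{\{\eta\}}\cap\mathds{F}_{\mathbf{M}}}$", i.e. checking that $\overline{\pi^{-1}(Y)}$ really does meet $\mathds{F}_{\mathbf{M}}$ in a dense subset — this is exactly what makes Lemma \ref{lemclos} applicable with this particular $\mathbf{M}$. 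If one wants "decomposition" in the strict sense of Section \S\ref{sec2} (pairwise disjoint \emph{and} locally closed), the disjointness is handled as above, and the local closedness of each piece in $\mathds{F}_{d_1<\cdots<d_k}(\mathds{C}^n)^T$ can be traced back, through the bijective correspondence $\mathbf{M}\leftrightarrow(\text{dense intersection with }\mathds{F}_{\mathbf{M}})$ supplied by Lemma \ref{lemclos}, to the locally closed structure of the thin cells $\mathds{F}_{\mathbf{M}}$ established in Lemma \ref{fmaffine}.
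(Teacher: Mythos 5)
Your argument is correct and is exactly the one the paper implicitly relies on: the paper states this corollary without a written proof immediately after Theorem \ref{mainthm}, whose own proof already invokes the uniqueness clause of Lemma \ref{lemclos} to pin down the plurimatroid $\mathbf{M}$ attached to a given $\eta\in\mathds{F}_{d_1<\cdots<d_k}(\mathds{C}^n)^T$. Your converse computation --- showing that $\eta\in\mathds{V}_{\mathbf{M}}(\EuScript{F}_{\mathbf{M}})$ forces $\pi^{-1}(Y)\subseteq\overline{\{\eta\}}\cap\mathds{F}_{\mathbf{M}}$ and hence $\overline{\{\eta\}}=\overline{\overline{\{\eta\}}\cap\mathds{F}_{\mathbf{M}}}$, so that uniqueness in Lemma \ref{lemclos} rules out $\eta$ lying in any $\mathds{V}_{\mathbf{M}'}(\EuScript{F}_{\mathbf{M}'})$ with $\mathbf{M}'\neq\mathbf{M}$ --- is precisely the piece needed to upgrade the surjectivity of Theorem \ref{mainthm} to a disjoint-union statement, and you have filled it in correctly.
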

    
    \begin{corollary}\label{hom-map}
    	We have a map of the disjoint union of $T$-orbit spaces to homology, given by
    	    \begin{equation}
    	    	\begin{array}{ccc}
    	    		\displaystyle\bigsqcup_{\mathbf{M}\in\EuScript{M}^{d_1,\ldots,d_k}_n}\EuScript{F}_{\mathbf{M}}&\longrightarrow&H_*(\mathds{F}_{d_1<\cdots<d_k}(\mathds{C}^n),\mathds{Z})\\
    	    		y&\longmapsto&\left[\overline{\mathds{V}_{\mathbf{M}}(y)}\right]
    	    	\end{array}.
    	    \end{equation}
    \end{corollary}
    
    Note that if $y\in\EuScript{F}_{\mathbf{M}}$ is a closed point, then $\mathds{V}_{\mathbf{M}}(y)=\Big[\overline{\Lambda\sbullet T_{\mathbf{M}}}\Big]$; furthermore, if $y$ is the generic point of $\EuScript{F}_{\mathbf{M}}$, then $\mathds{V}_{\mathbf{M}}(y)=\Big[\overline{\mathds{F}_{\mathbf{M}}}\Big]$.
    
    Given a subvariety $Y\subset\mathds{F}_{d_1<\cdots<d_k}(\mathds{C}^n)$, let us denote by $h(Y)\in H_*(\mathds{F}_{d_1<\cdots<d_k}(\mathds{C}^n),\mathds{Z})$ its homology class and let us consider the map
        \begin{equation*}
        	h:\mathds{F}_{d_1<\cdots<d_k}(\mathds{C}^n)^T\longrightarrow H_*(\mathds{F}_{d_1<\cdots<d_k}(\mathds{C}^n),\mathds{Z}).
        \end{equation*}  
    
    \begin{definition}
    	We will call a class $\lambda\in H_*(\mathds{F}_{d_1<\cdots<d_k}(\mathds{C}^n),\mathds{Z})$ a \textbf{prime} $T$-\textbf{class} if there exist $Y\subset\mathds{F}_{d_1<\cdots<d_k}(\mathds{C}^n)^T$ such that $\lambda=h(Y)$.
    \end{definition}
    
    We want to describe the preimage of $\lambda$ in $\mathds{F}_{d_1<\cdots<d_k}(\mathds{C}^n)^T$, i.e., we want to describe $h^{-1}(\lambda)\subset\mathds{F}_{d_1<\cdots<d_k}(\mathds{C}^n)^T$. Thus, let $M^{d_1,\ldots,d_k}_n$ be a complete set of representatives of $\nicefrac{\EuScript{M}^{d_1,\ldots,d_k}_n}{\mathfrak{S}_n}$ and let
        \begin{equation*}
        	\begin{array}{cccc}
        		\theta:&\displaystyle\bigsqcup_{\mathbf{M}\in M^{d_1,\ldots,d_k}_n}\EuScript{F}_{\mathbf{M}}&\longrightarrow&H_*(\mathds{F}_{d_1<\cdots<d_k}(\mathds{C}^n),\mathds{Z})\\
        		&y&\longmapsto&\theta(y):=h\left(\overline{\mathds{V}_{\mathbf{M}}(y)}\right)
        	\end{array}.
        \end{equation*}
    
    For every prime $T$-class $\lambda\in H_*(\mathds{F}_{d_1<\cdots<d_k}(\mathds{C}^n),\mathds{Z})$ and $\mathbf{M}\in M^{d_1,\ldots,d_k}_n$, we define
        \begin{equation*}
        	\EuScript{F}_{\mathbf{M}}(\lambda):=\{y\in\EuScript{F}_{\mathbf{M}}\mid\theta(y)=\lambda\}
        \end{equation*}
    and
        \begin{equation*}
        	M^{d_1,\ldots,d_k}_n(\lambda):=\{\mathbf{M}\in \EuScript{M}^{d_1,\ldots,d_k}_n\mid\EuScript{F}_{\mathbf{M}}(\lambda)\neq\emptyset\}.
        \end{equation*}
    
    \begin{corollary}\label{invtclass}
    	Let $\lambda\in H_*(\mathds{F}_{d_1<\cdots<d_k}(\mathds{C}^n),\mathds{Z})$ a prime $T$-class. Then the set
    	    \begin{equation*}
    	    	\EuScript{F}(d_1<\cdots<d_k,n,\lambda):=\displaystyle\bigsqcup_{\mathbf{M}\in M^{d_1,\ldots,d_k}_n(\lambda)}\mathds{V}_{\mathbf{M}}\big(\EuScript{F}_{\mathbf{M}}(\lambda)\big)
    	    \end{equation*}
    	is naturally a parameter space for $h^{-1}(\lambda)$.
    \end{corollary}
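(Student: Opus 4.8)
The plan is to read the statement off from the disjoint decomposition of Corollary \ref{vmdecomp}, the surjectivity of Theorem \ref{mainthm}, the $\mathfrak{S}_n$-equivariance of Proposition \ref{fmisoprop}, and the fact that $\mathfrak{S}_n$ acts trivially on $H_*(\mathds{F}_{d_1<\cdots<d_k}(\mathds{C}^n),\mathds{Z})$. First I would unwind the maps involved. For a plurimatroid $\mathbf{M}$ and a point $y\in\EuScript{F}_{\mathbf{M}}$, the definition of $\mathds{V}_{\mathbf{M}}$ and of $h$ on $\mathds{F}_{d_1<\cdots<d_k}(\mathds{C}^n)^T$ show that $\theta(y)=h\big(\overline{\mathds{V}_{\mathbf{M}}(y)}\big)$ is nothing but $h\big(\mathds{V}_{\mathbf{M}}(y)\big)$; extending the formula $\EuScript{F}_{\mathbf{M}}(\lambda)=\{y\in\EuScript{F}_{\mathbf{M}}\mid\theta(y)=\lambda\}$ verbatim to every $\mathbf{M}\in\EuScript{M}^{d_1,\ldots,d_k}_n$, this gives, for each $\mathbf{M}$,
\[
\mathds{V}_{\mathbf{M}}(y)\in h^{-1}(\lambda)\iff y\in\EuScript{F}_{\mathbf{M}}(\lambda).
\]

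Next I would intersect the decomposition $\mathds{F}_{d_1<\cdots<d_k}(\mathds{C}^n)^T=\bigsqcup_{\mathbf{M}\in\EuScript{M}^{d_1,\ldots,d_k}_n}\mathds{V}_{\mathbf{M}}(\EuScript{F}_{\mathbf{M}})$ of Corollary \ref{vmdecomp} with the $T$-invariant subset $h^{-1}(\lambda)$. Applying the equivalence above term by term yields
\[
h^{-1}(\lambda)=\bigsqcup_{\mathbf{M}\in\EuScript{M}^{d_1,\ldots,d_k}_n}\mathds{V}_{\mathbf{M}}\big(\EuScript{F}_{\mathbf{M}}(\lambda)\big),
\]
the summands with $\EuScript{F}_{\mathbf{M}}(\lambda)=\emptyset$ simply disappearing. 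Moreover each $\mathds{V}_{\mathbf{M}}$ is injective (distinct points of $\EuScript{F}_{\mathbf{M}}$ have distinct closures, hence distinct $\pi$-saturated preimages in $\mathds{F}_{\mathbf{M}}$, hence distinct closures of those in the flag variety), so each nonempty summand is in natural bijection, through $\mathds{V}_{\mathbf{M}}$, with the subset $\EuScript{F}_{\mathbf{M}}(\lambda)$ of the geometric quotient $\EuScript{F}_{\mathbf{M}}$: this is the source of the ``parameter space'' structure.

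It then remains to pass from the full set $\EuScript{M}^{d_1,\ldots,d_k}_n$ of plurimatroids to the chosen system of representatives $M^{d_1,\ldots,d_k}_n$ of $\EuScript{M}^{d_1,\ldots,d_k}_n/\mathfrak{S}_n$. The $\mathfrak{S}_n$-action on the flag variety is by right multiplication by permutation matrices, which lie in the connected group $\mathrm{GL}_n(\mathds{C})$; hence every $\sigma\in\mathfrak{S}_n$ acts by an automorphism homotopic to the identity, acts trivially on $H_*(\mathds{F}_{d_1<\cdots<d_k}(\mathds{C}^n),\mathds{Z})$, and $h^{-1}(\lambda)$ is $\mathfrak{S}_n$-invariant. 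On the other hand $\mathfrak{S}_n$ normalizes $T$ and conjugates $\mathrm{Stab}_T(\mathds{F}_{\mathbf{M}})$ onto $\mathrm{Stab}_T(\mathds{F}_{\sigma(\mathbf{M})})$, so the isomorphism $\varPhi_\sigma\colon\mathds{F}_{\mathbf{M}}\to\mathds{F}_{\sigma(\mathbf{M})}$ of Proposition \ref{fmisoprop} intertwines the $T_{\mathbf{M}}$- and $T_{\sigma(\mathbf{M})}$-actions along an isomorphism $T_{\mathbf{M}}\cong T_{\sigma(\mathbf{M})}$, and hence, by the universal property of the geometric quotient (Theorem \ref{fm-ugq}), descends to an isomorphism $\EuScript{F}_{\mathbf{M}}\cong\EuScript{F}_{\sigma(\mathbf{M})}$ compatible with $\mathds{V}_{\mathbf{M}}$ and $\mathds{V}_{\sigma(\mathbf{M})}$; in particular $\mathds{V}_{\sigma(\mathbf{M})}\big(\EuScript{F}_{\sigma(\mathbf{M})}(\lambda)\big)=\sigma\cdot\mathds{V}_{\mathbf{M}}\big(\EuScript{F}_{\mathbf{M}}(\lambda)\big)$. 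Grouping the displayed decomposition of $h^{-1}(\lambda)$ by $\mathfrak{S}_n$-orbits and keeping one representative per orbit gives
\[
h^{-1}(\lambda)=\mathfrak{S}_n\cdot\EuScript{F}(d_1<\cdots<d_k,n,\lambda),
\]
with the union defining $\EuScript{F}(d_1<\cdots<d_k,n,\lambda)$ disjoint because distinct representatives index disjoint pieces of $\mathds{F}_{d_1<\cdots<d_k}(\mathds{C}^n)^T$.

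Putting these together, $\EuScript{F}(d_1<\cdots<d_k,n,\lambda)\subseteq h^{-1}(\lambda)$ meets every $\mathfrak{S}_n$-orbit of $h^{-1}(\lambda)$, and --- using Theorem \ref{mainthm} and Lemma \ref{lemclos} to guarantee that every $\eta\in h^{-1}(\lambda)$ is indeed of the form $\mathds{V}_{\mathbf{M}}(y)$ --- each of its points $y\in\EuScript{F}_{\mathbf{M}}(\lambda)$ corresponds, through the geometric quotient $\EuScript{F}_{\mathbf{M}}$, to the $T$-invariant irreducible subvariety $\overline{\mathds{V}_{\mathbf{M}}(y)}$ of class $\lambda$, every such subvariety arising in this way once up to the $\mathfrak{S}_n$-symmetry; this is exactly the asserted parametrization. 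The step I expect to be the real obstacle is this $\mathfrak{S}_n$-reduction: verifying that permutations act trivially on homology (so that $h^{-1}(\lambda)$ is $\mathfrak{S}_n$-stable and replacing $\EuScript{M}^{d_1,\ldots,d_k}_n$ by a representative set loses nothing) and that $\varPhi_\sigma$ descends compatibly through the geometric quotients --- both amounting to careful bookkeeping of how $\mathfrak{S}_n$ normalizes $T$ and permutes the stabilizers $\mathrm{Stab}_T(\mathds{F}_{\mathbf{M}})$.
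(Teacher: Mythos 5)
The paper states this corollary with no proof at all, treating it as an immediate consequence of Corollary \ref{vmdecomp} and the definitions of $\theta$, $\EuScript{F}_{\mathbf{M}}(\lambda)$ and $M^{d_1,\ldots,d_k}_n(\lambda)$; your proposal supplies a correct argument along exactly the lines the authors evidently intend. The first half --- the identity $\theta=h\circ\mathds{V}_{\mathbf{M}}$, intersecting the decomposition of Corollary \ref{vmdecomp} with $h^{-1}(\lambda)$, and the injectivity of each $\mathds{V}_{\mathbf{M}}$ --- is the whole content of the corollary under one reading of the (ambiguous) definition of $M^{d_1,\ldots,d_k}_n(\lambda)$, whose index set the paper literally writes as all of $\EuScript{M}^{d_1,\ldots,d_k}_n$ even though $\theta$ is only defined over the representative set $M^{d_1,\ldots,d_k}_n$. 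Your second half, the $\mathfrak{S}_n$-reduction, is genuinely additional work that the paper nowhere carries out: the triviality of the $\mathfrak{S}_n$-action on $H_*(\mathds{F}_{d_1<\cdots<d_k}(\mathds{C}^n),\mathds{Z})$ (via connectedness of $\mathrm{GL}_n(\mathds{C})$), the descent of $\varPhi_\sigma$ through the geometric quotients (via $\mathfrak{S}_n$ normalizing $T$), and the resulting compatibility $\mathds{V}_{\sigma(\mathbf{M})}\big(\EuScript{F}_{\sigma(\mathbf{M})}(\lambda)\big)=\sigma\cdot\mathds{V}_{\mathbf{M}}\big(\EuScript{F}_{\mathbf{M}}(\lambda)\big)$ are all correct and are precisely what is needed to make sense of restricting to orbit representatives. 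The only caveat is interpretive rather than mathematical: under your reading the set $\EuScript{F}(d_1<\cdots<d_k,n,\lambda)$ parametrizes $h^{-1}(\lambda)$ only up to the $\mathfrak{S}_n$-action (you prove $h^{-1}(\lambda)=\mathfrak{S}_n\cdot\EuScript{F}(d_1<\cdots<d_k,n,\lambda)$, not equality), so you should state explicitly which of the two readings of ``parameter space'' you are establishing; since the corollary's wording is informal, either is defensible, but the distinction should not be left implicit.
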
    

\section{Thin Schubert cells in the flag variety $\mathds{F}_{1<n-1}(\mathds{C}^n)$}\label{sec5}	
    In the special case of the flag variety $\mathds{F}_{1<n-1}(\mathds{C}^n)$ for $n\geq 3$, it is possible to provide a classification of the thin Schubert cells and to determine their number. To that end, we remember some necessary definitions.
    
     Let $M=([n],\mathcal{B})$ be a matroid of rank $d$ on $n$ elements, i.e., each basis $B\in\mathcal{B}$ has size $d$. Then:
        \begin{itemize}
        	\item (cf. \cite[\S1.1.3, p. 4]{BGW03}) the \textit{rank} of $S\subseteq[n]$, denoted as $r(S)$, is given by
        	\begin{equation}\label{def:ranksubset}
        		r(S)=\max_{B\in\mathcal{B}}{|S\cap B|};
        	\end{equation}
        	\item (cf. \cite[\S4, eq. (1.4.1), p. 25]{Oxl11}) the \textit{closure} of $S\subseteq[n]$, denoted as $\mathrm{cl}(S)$, is the set
        	\begin{equation}\label{def:closuresubset}
        		\mathrm{cl}(S)=\{i\in[n]:r(S\cup\{i\})=r(S)\};
        	\end{equation}
        	\item (cf. \cite[\S1.4, p. 28]{Oxl11}) the set of \textit{flats of} $M$, we denote as $\operatorname{Flats}(M)$, is given by
        	\begin{equation}\label{def:flats}
        		\operatorname{Flats}(M)=\{S\subseteq[n]:\mathrm{cl}(S)=S\}.
        	\end{equation} 
        \end{itemize}
        
    The next definition has been adapted for our purposes of Jarra and Lorscheid \cite[\S0, p. 2]{JL24}.
    
    \begin{definition}\label{def:flagmat}
    	Let $E=[n]$ and $\mathbf{r}=(1,n-1)$. A plurimatroid $\mathbf{M}=(M_1,M_2)\in\EuScript{M}^{1,n-1}_n$ is a \emph{flag matroid of rank $\mathbf{r}$ on $E$} if $\operatorname{Flats}(M_1)\subset\operatorname{Flats}(M_2)$. We also say that $M_1$ \emph{is a quotient of} $M_2$, and write $M_2\twoheadrightarrow M_1$ in this case.
    \end{definition}
    
    \begin{definition}\label{cellcomprop}
    	Let $\mathbf{M}=(M_1,M_2)\in\EuScript{M}^{1,n-1}_n$ be a plurimatroid and
    	    \begin{equation*}
    	    	\mathds{F}_{\mathbf{M}}=(G_{M_1}\times G_{M_2})\cap\mathds{F}_{1<n-1}(\mathds{C}^n)
    	    \end{equation*}
    	its associated thin Schubert cell. We say that $\mathds{F}_{\mathbf{M}}$ is \textbf{complete} if $\mathds{F}_{\mathbf{M}}=G_{M_1}\times G_{M_2}$, and \textbf{proper} if $\mathds{F}_{\mathbf{M}}\subsetneq G_{M_1}\times G_{M_2}$.  
    \end{definition}
    
    \begin{remark}
    	The completeness is equivalent to $G_{M_1}\times G_{M_2}\subseteq\mathds{F}_{1<n-1}(\mathds{C}^n)$, whereas the properness is equivalent to $G_{M_1}\times G_{M_2}\nsubseteq\mathds{F}_{1<n-1}(\mathds{C}^n)$.
    \end{remark}
    
    \begin{lemma}\label{lem:flats}
    	Let $\mathbf{M}=(M_1,M_2)\in\EuScript{M}^{1,n-1}_n$ be a plurimatroid, and denote by $\mathcal{B}_1$ and $\mathcal{B}_2$ the sets of bases of $M_1$ and $M_2$, respectively. Then
    	\begin{equation*}
    		\operatorname{Flats}(M_1)=\{[n]\setminus K_1,[n]\}\quad\text{and}\quad \operatorname{Flats}(M_2)=\{S\subseteq[n]:|K_2\setminus S|\neq1\},
    	\end{equation*}
    	where $K_1:=\{k\in[n]:\{k\}\in\mathcal{B}_1\}$ and $K_2:=\{k\in[n]:[n]\setminus\{k\}\in\mathcal{B}_2\}$. 
    \end{lemma}
    
    \begin{remark}\label{obskis}
    	Note that by definition of sets $K_1$ and $K_2$ in Lemma \ref{lem:flats}, we have $|K_i|=|\mathcal{B}_i|$ for $i=1,2$.
    \end{remark}
    
    \begin{proof}
    	\textsc{Flats of $M_1$}. Let $S\subseteq[n]$ and $k\in K_1$, then
    	    \begin{equation*}
    	    	|S\cap\{k\}|=\begin{cases*}
    	    		1, &if\, $k\in S$\\
    	    		0, &if\, $k\notin S$
    	    	\end{cases*}.
    	    \end{equation*}
    	
    	It follows that
    	    \begin{equation*}
    	    	r(S)=\max_{k\in K_1}|S\cap\{k\}|=\begin{cases*}
    	    		1, &if\, $S\cap K_1\neq\emptyset$\\
    	    		0, &if\, $S\cap K_1=\emptyset$
    	    	\end{cases*}.
    	    \end{equation*}
    	
    	Consequently, if $S\cap K_1=\emptyset$, then $r(S)=0$ and hence $\mathrm{cl}(S)=[n]\setminus K_1$. On the other hand, if $S\cap K_1\neq\emptyset$, then $r(S)=1$ which implies that $r(S\cup\{i\})=1=r(S)$ for all $i\in[n]$; therefore, $\mathrm{cl}(S)=[n]$. This proves that $\operatorname{Flats}(M_1)=\{[n]\setminus K_1,[n]\}$.
    	
    	\textsc{Flats of $M_2$}. Let $S\subseteq[n]$ and $k\in K_2$, then $|S\cap([n]\setminus\{k\})|=|S\setminus\{k\}|$ and hence
    	    \begin{equation*}
    	    	|S\setminus\{k\}|=\begin{cases*}
    	    		|S|, &if\, $k\notin S$\\
    	    		|S|-1, &if\, $k\in S$
    	    	\end{cases*}.
    	    \end{equation*}
    	
    	Thus we have
    	    \begin{equation*}
    	    	r(S)=\max_{k\in K_2}|S\setminus\{k\}|=\begin{cases*}
    	    		|S|, &if\, $K_2\nsubseteq S$\\
    	    		|S|-1, &if\, $K_2\subseteq S$
    	    	\end{cases*}.
    	    \end{equation*}
    	
    	Note that:
    	    \begin{itemize}
    	    	\item If $K_2\subseteq S$, then $r(S\cup\{i\})=|S|=r(S)+1$ for all $i\notin S$; hence, $\mathrm{cl}(S)=S$.
    	    	\item If $K_2\nsubseteq S$, then $r(S)=|S|$, and we have two possibilities:
    	    	    \begin{enumerate}
    	    	    	\item $|K_2\setminus S|=1$. In this case, $K_2\setminus S=\{k_0\}$, where $k_0$ is the unique element of $K_2$ that does not belong to $S$; that is, $K_2\setminus\{k_0\}\subseteq S$, which implies $K_2\subseteq S\cup\{k_0\}$. Hence,
    	    	    	\begin{equation*}
    	    	    		r(S\cup\{k_0\})=|S\cup\{k_0\}|-1=|S|=r(S).
    	    	    	\end{equation*}
    	    	    	Therefore, $\mathrm{cl}(S)=S\cup\{k_0\}\neq S$.
    	    	    	\item $|K_2\setminus S|\geq 2$. In this setting, $K_2\not\subseteq S\cup\{i\}$ for all $i\notin S$. Hence,
    	    	    	\begin{equation*}
    	    	    		r(S\cup\{i\})=|S\cup\{i\}|=|S|+1=r(S)+1,
    	    	    	\end{equation*}
    	    	    	which implies that $i\notin\operatorname{cl}(S)$. Therefore, $\mathrm{cl}(S)=S$.
    	    	    \end{enumerate}
    	    \end{itemize}
    	
    	Consequently, $\operatorname{Flats}(M_2)=\{S\subseteq[n]:|K_2\setminus S|\neq1\}$ as required.
    \end{proof}
    
    \begin{proposition}[Characterization of Thin Schubert Cells]\label{charthincells}
    	With the same notation, hypotheses and assumptions as in Lemma \ref{lem:flats}, consider the flag variety $\mathds{F}_{1<n-1}(\mathds{C}^n)$ and let $\mathds{F}_{\mathbf{M}}$ denote its associated thin Schubert cell. The following statements are equivalent:
    	    \begin{enumerate}
    	    	\item $\mathds{F}_{\mathbf{M}}$ is nonempty.
    	    	\item $\mathbf{M}$ is a flag matroid of rank $\mathbf{r}=(1,n-1)$.
    	    	\item $|K_1\cap K_2|\neq 1$.
    	    \end{enumerate}
    	Moreover:
    	    \begin{itemize}
    	    	\item $\mathds{F}_{\mathbf{M}}$ is complete if and only if $|K_1\cap K_2|=0$.
    	    	\item $\mathds{F}_{\mathbf{M}}$ is proper if and only if $|K_1\cap K_2|\geq 2$.
    	    \end{itemize}   
    \end{proposition}
    
    \begin{proof}
    	Let $(W_1,W_2)\in G(1,n)\times G(n-1,n)$ and
    	    \begin{equation*}
    	    	v_k:=p_{\{k\}}(W_1)\quad\text{and}\quad\alpha_k:=p_{[n]\setminus\{k\}}(W_2)\quad\forall k\in[n].
    	    \end{equation*}
    	Note that
    	    \begin{align*}
    	    	(W_1,W_2)\in\mathds{F}_{\mathbf{M}}&\quad\Longleftrightarrow\quad\sum_{k=1}^{n}(-1)^{k-1}\alpha_kv_k=0 \tag*{(by the incidence equation)}\\
    	    	&\quad\Longleftrightarrow\quad\sum_{k\in K_1\cap K_2}^{n}(-1)^{k-1}\alpha_kv_k=0 \tag{since $(W_1,W_2)\in G_{M_1}\times G_{M_2}$}
    	    \end{align*}
    	    
    	    \begin{itemize}
    	    	\item If $|K_1\cap K_2|=1$, then $\alpha_{k'}v_{k'}=0$ for the unique $k'\in K_1\cap K_2$, which is a contradiction since $\alpha_{k'}$ and $v_{k'}$ are nonzero; hence, $\mathds{F}_{\mathbf{M}}=\emptyset$.
    	    	\item If $|K_1\cap K_2|=0$, then the last equality is satisfied trivially and imposes no constraint on the summands; consequently, $G_{M_1}\times G_{M_2}\subset\mathds{F}_{1<n-1}(\mathds{C}^n)$.
    	    	\item If $|K_1\cap K_2|\geq 2$, then the reduced incidence equation imposes exactly one independent condition for $(W_1,W_2)\in G_{M_1}\times G_{M_2}$ to lie in $\mathds{F}_{1<n-1}(\mathds{C}^n)$; thus $\mathds{F}_{\mathbf{M}}\subsetneq G_{M_1}\times G_{M_2}$.
    	    \end{itemize}
    	
    	To conclude the proof, we need to show that $\mathbf{M}$ is a flag matroid if and only if $|K_1\cap K_2|\neq1$. By Lemma \ref{lem:flats}, we have $\operatorname{Flats}(M_1)=\{[n]\setminus K_1,[n]\}$ and $\operatorname{Flats}(M_2)=\{S\subseteq[n]:|K_2\setminus S|\neq1\}$. Now, observe that $[n]\in\operatorname{Flats}(M_2)$ trivially, so it suffices to determine when the remaining set $S=[n]\setminus K_1\in\operatorname{Flats}(M_1)$ also belongs to $\operatorname{Flats}(M_2)$. Then
    	    \begin{equation*}
    	    	[n]\setminus K_1\in\operatorname{Flats}(M_2)\quad \Longleftrightarrow\quad |K_2\setminus([n]\setminus K_1)|\neq1\quad \Longleftrightarrow\quad |K_2\cap K_1|\neq1,
    	    \end{equation*}
    	as required.
    \end{proof}
    
    \begin{corollary}[Dimension of Thin Schubert Cells]\label{dimfm}
    	With the same notation as in Lemma \ref{lem:flats} and hypotheses as in Proposition \ref{charthincells}, the following statements hold:
    	    \begin{enumerate}
    	    	\item If $|K_1\cap K_2|=0$, then $\dim\mathds{F}_{\mathbf{M}}=\dim\,(G_{M_1}\times G_{M_2})=|K_1|+|K_2|-2$.
    	    	\item If $|K_1\cap K_2|\geq 2$, then $\dim\mathds{F}_{\mathbf{M}}=\dim\,(G_{M_1}\times G_{M_2})-1=|K_1|+|K_2|-3$.
    	    \end{enumerate}
    \end{corollary}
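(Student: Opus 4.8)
The plan is to reduce the statement to two inputs: a dimension count for the ambient product $G_{M_1}\times G_{M_2}$, together with the explicit description of $\mathds{F}_{\mathbf{M}}$ inside it that already appears in the proof of Proposition \ref{charthincells}. First I would record that under the Pl\"ucker embeddings both $G(1,n)$ and $G(n-1,n)$ are identified with $\mathds{P}^{n-1}$, and that for the rank-$1$ matroid $M_1$ (resp. the rank-$(n-1)$ matroid $M_2$) the thin Schubert cell $G_{M_i}$ is exactly the locus of points of $\mathds{P}^{n-1}$ whose Pl\"ucker coordinates indexed by $K_i$ are all nonzero and whose remaining coordinates all vanish. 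Such a locus is the intersection of the coordinate linear subspace $\{p_I=0 \text{ for } I\notin B_{M_i}\}\cong\mathds{P}^{|K_i|-1}$ with the open subset on which the remaining coordinates do not vanish; hence $G_{M_i}\cong(\mathds{C}^*)^{|K_i|-1}$ is an irreducible affine variety of dimension $|K_i|-1$, and therefore $G_{M_1}\times G_{M_2}$ is irreducible and affine of dimension $|K_1|+|K_2|-2$.

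For part (1), when $K_1\cap K_2=\emptyset$ Proposition \ref{charthincells}(1) states that $\mathds{F}_{\mathbf{M}}$ is complete, i.e. $\mathds{F}_{\mathbf{M}}=G_{M_1}\times G_{M_2}$, so the equalities $\dim\mathds{F}_{\mathbf{M}}=\dim(G_{M_1}\times G_{M_2})=|K_1|+|K_2|-2$ are immediate from the previous paragraph.

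For part (2), assume $|K_1\cap K_2|>1$. The chain of equivalences in the proof of Proposition \ref{charthincells} identifies $\mathds{F}_{\mathbf{M}}$ with the zero locus inside $G_{M_1}\times G_{M_2}$ of the reduced incidence relation
\[
\ell=\sum_{k\in K_1\cap K_2}(-1)^{k-1}\alpha_k v_k ,\qquad v_k=p_{\{k\}},\ \ \alpha_k=p_{[n]\setminus\{k\}},
\]
where $v_k$ and $\alpha_k$ are the relevant Pl\"ucker coordinates on the two factors. Choosing $k_0\in K_1$ and $k_0'\in K_2$ and dehomogenizing $\ell$ in the affine chart $\{p_{\{k_0\}}\neq0\}\times\{p_{[n]\setminus\{k_0'\}}\neq0\}$, which contains $G_{M_1}\times G_{M_2}$, exhibits $\mathds{F}_{\mathbf{M}}$ as the zero set of a single regular function on the irreducible affine variety $G_{M_1}\times G_{M_2}$. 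By Proposition \ref{charthincells} the cell $\mathds{F}_{\mathbf{M}}$ is nonempty (as $|K_1\cap K_2|\neq1$) and, by part (2) of that proposition, is properly contained in $G_{M_1}\times G_{M_2}$; hence this function is neither a unit nor a zero divisor in the coordinate ring of $G_{M_1}\times G_{M_2}$. Krull's principal ideal theorem then forces every irreducible component of $\mathds{F}_{\mathbf{M}}$ to have codimension exactly $1$, so $\dim\mathds{F}_{\mathbf{M}}=\dim(G_{M_1}\times G_{M_2})-1=|K_1|+|K_2|-3$.

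The only point requiring care is the pure-codimension-one claim in part (2): one must ensure that the single incidence relation lowers the dimension by exactly one on \emph{every} irreducible component, not merely generically. This is precisely the content of Krull's Hauptidealsatz, once one has realized $G_{M_1}\times G_{M_2}$ as an irreducible affine variety and imported from Proposition \ref{charthincells} both that the cutting equation does not vanish identically on $G_{M_1}\times G_{M_2}$ (properness) and that its zero locus there is nonempty. The remaining steps are routine bookkeeping.
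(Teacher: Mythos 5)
Your proof is correct, and it follows the route the paper itself intends: the paper states this corollary without proof, treating it as immediate from the assertion in the proof of Proposition \ref{charthincells} that the reduced incidence equation ``imposes exactly one independent condition'' on $G_{M_1}\times G_{M_2}$. What you add is the justification the paper omits: identifying $G_{M_i}\cong(\mathds{C}^*)^{|K_i|-1}$ so that $G_{M_1}\times G_{M_2}$ is an irreducible affine variety of dimension $|K_1|+|K_2|-2$, and then invoking Krull's Hauptidealsatz (together with nonemptiness and properness from Proposition \ref{charthincells}) to get codimension exactly one on every component rather than just generically. This is exactly the right way to make the corollary rigorous, and I see no gap.
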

    
    \begin{corollary}[Global Counting Formulas for Thin Schubert Cells]\label{numcpe}
    	In the setting of Proposition \ref{charthincells}, let
    	    \begin{equation*}
    	    	C:=\{\mathds{F}_{\mathbf{M}}:\mathds{F}_{\mathbf{M}}\ \text{is complete}\},\quad P:=\{\mathds{F}_{\mathbf{M}}:\mathds{F}_{\mathbf{M}}\ \text{is proper}\}\quad\text{and}\quad E:=\{\mathds{F}_{\mathbf{M}}:\mathds{F}_{\mathbf{M}}\ \text{is empty}\}.
    	    \end{equation*}
    	Then:
    	\begin{enumerate}
    		\item $|C|=3^n-2^{n+1}+1$.
    		\item $|P|=4^n-3^n-n\cdot 3^{n-1}$.
    		\item $|E|=n\cdot 3^{n-1}$.
    	\end{enumerate}
    \end{corollary}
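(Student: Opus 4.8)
The plan is to reduce the count to a bijection between the indexing set $\EuScript{M}^{1,n-1}_n$ and the set of ordered pairs of non-empty subsets of $[n]$, and then to perform three elementary enumerations dictated by the trichotomy in Proposition~\ref{charthincells}. First I would pin down the two matroid factors. A rank-$1$ matroid on $[n]$ has singleton bases, and any non-empty family of singletons automatically satisfies the exchange property~\eqref{exchange}: for distinct $\{i\},\{j\}\in B_M$ the element $j\in\{j\}\setminus\{i\}$ works because $\{j\}\in B_M$. Dually, a rank-$(n-1)$ matroid has bases of the form $[n]\setminus\{k\}$, and again every non-empty such family satisfies~\eqref{exchange}. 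Hence $M_1\mapsto K_1$ and $M_2\mapsto K_2$, with $K_1,K_2$ as in~\eqref{sets:k1k2}, sets up a bijection $\EuScript{M}^{1,n-1}_n\cong\{(K_1,K_2)\mid\emptyset\neq K_1,K_2\subseteq[n]\}$, under which $|K_i|=|B_{M_i}|$ (Remark~\ref{obskis}). In particular the total number of thin Schubert cells is $(2^n-1)^2$, so $|C|+|E|+|P|=(2^n-1)^2$.

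By Proposition~\ref{charthincells} and Definition~\ref{cellcomprop}, the three classes correspond to $K_1\cap K_2=\emptyset$ (complete), $|K_1\cap K_2|=1$ (empty) and $|K_1\cap K_2|\geq 2$ (proper), so it only remains to count ordered pairs of non-empty subsets with the prescribed intersection. For $|C|$ I would count pairs of disjoint subsets of $[n]$ by assigning each of the $n$ elements to one of the three boxes $K_1$, $K_2$, or neither, giving $3^n$ pairs, and then apply inclusion--exclusion to discard the $2^n$ pairs with $K_1=\emptyset$ and the $2^n$ with $K_2=\emptyset$, restoring the one pair with both empty: $|C|=3^n-2^{n+1}+1$. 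For $|E|$, a pair with exactly one common element is specified by choosing that element ($n$ ways) and distributing the remaining $n-1$ elements among the boxes ``$K_1$ only'', ``$K_2$ only'', ``neither'' ($3^{n-1}$ ways), with non-emptiness of $K_1,K_2$ automatic since each contains the common element; hence $|E|=n\cdot 3^{n-1}$. Finally, using $(2^n-1)^2=4^n-2^{n+1}+1$ I would obtain $|P|=(2^n-1)^2-|C|-|E|=4^n-3^n-n\cdot 3^{n-1}$ (a direct count summing over intersection sizes $\geq 2$ also works, but complementation is shorter).

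The only delicate point is the first step: one must verify that \emph{every} non-empty family of singletons (respectively, of coatoms $[n]\setminus\{k\}$) genuinely is a matroid, so that the indexing set is exactly the set of pairs $(K_1,K_2)$ and no plurimatroid is omitted or double counted. Once that bijection is established, everything else is a routine ``balls into boxes with inclusion--exclusion'' computation and I foresee no further obstacle; the identity $|C|+|E|+|P|=(2^n-1)^2$ then provides an immediate consistency check on the three formulas.
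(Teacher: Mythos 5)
Your proposal is correct and takes essentially the same route as the paper: both arguments count pairs $(K_1,K_2)$ by intersection size via box assignments and inclusion--exclusion, with $|C|$ and $|E|$ computed identically, the only cosmetic difference being that you obtain $|P|$ by complementation against the total $(2^n-1)^2$ while the paper counts it directly starting from $4^n$ (the same arithmetic). Your explicit check that every non-empty family of singletons (resp.\ of sets $[n]\setminus\{k\}$) satisfies the exchange property, so that $\EuScript{M}^{1,n-1}_n$ really is in bijection with pairs of non-empty subsets of $[n]$, is a useful point of rigor that the paper leaves implicit.
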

    
    \begin{proof}
    	\begin{enumerate}
    		\item For each $i\in[n]$ there are only three cases: (\textit{a}) $i\in K_1$, (\textit{b}) $i\in K_2$ or (\textit{c}) $i\notin K_1\cup K_2$, but never in both, since $K_1\cap K_2=\emptyset$. Hence there are $3^n$ options, including the cases $K_1=\emptyset$ or $K_2=\emptyset$ which we must exclude. Observe that if $K_1=\emptyset$, then $i\in K_2$ or $i\in[n]\setminus K_2$, yielding $2^n$ possibilities and similarly if $K_2=\emptyset$. Since the case $K_1=K_2=\emptyset$ has been counted twice, we add it back once. Consequently, $|C|=3^n-2^{n+1}+1$.
    		\item Here, for each $i\in[n]$ we have exactly four options, namely: (\textit{a}) $i\in K_1\cap K_2$, (\textit{b}) $i\in K_1\setminus K_2$, (\textit{c}) $i\in K_2\setminus K_1$ or (\textit{d}) $i\notin K_1\cup K_2$. Hence, there are $4^n$ possibilities. We exclude now the case $K_1\cap K_2=\emptyset$, leaving $3^n$ options ---(\textit{b}) - (\textit{d})---. To conclude, we must also exclude the case $|K_1\cap K_2|=1$. The unique element of this intersection can be choosen in $n$ ways, and each of the remaining $n-1$ elements has three possibilities ---again, (\textit{b}) - (\textit{d})---, so $3^{n-1}$ choices. Therefore, $|P|=4^n-3^n-n\cdot 3^{n-1}$.
    		\item This case has already been done at the end of the preceding item.
    	\end{enumerate}
    \end{proof}
    
    \begin{corollary}[Restricted Counting Formulas for Thin Schubert Cells]\label{numcpe-res}
    	Under the hypotheses of Proposition \ref{charthincells} and notation of Corollary \ref{numcpe}, let $i,j\in[n]$ and define
    	\begin{align*}
    		C(i,j)&:=\{\mathds{F}_{\mathbf{M}}\in C:|\mathcal{B}_1|=i\ \ \text{and}\ \  |\mathcal{B}_2|=j\},\\
    		P(i,j)&:=\{\mathds{F}_{\mathbf{M}}\in P:|\mathcal{B}_1|=i\ \ \text{and}\ \  |\mathcal{B}_2|=j\},\\
    		E(i,j)&:=\{\mathds{    F}_{\mathbf{M}}\in E:|\mathcal{B}_1|=i\ \ \text{and}\ \  |\mathcal{B}_2|=j\}.
    	\end{align*}   
    	Then:
    	\begin{enumerate}
    		\item $|C(i,j)|=\displaystyle{n\choose i}{n-i\choose j}$.
    		\item $|P(i,j)|=\displaystyle{n\choose i}\left[{n\choose j}-{n-i\choose j}-i{n-i\choose j-1}\right]$.
    		\item $|E(i,j)|=\displaystyle i{n\choose i}{n-i\choose j-1}$.
    	\end{enumerate}
    \end{corollary}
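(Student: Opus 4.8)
The plan is to reduce the enumeration to a count of ordered pairs of subsets of $[n]$ and then to perform three short binomial computations.

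First I would set up the dictionary between plurimatroids in $\EuScript{M}^{1,n-1}_n$ and ordered pairs of nonempty subsets of $[n]$. A rank-$1$ matroid on $[n]$ is exactly a nonempty family of singletons, hence is recovered from $K_1=\{k\in[n]:\{k\}\in B_{M_1}\}$; dually, a rank-$(n-1)$ matroid is recovered from $K_2=\{k\in[n]:[n]\setminus\{k\}\in B_{M_2}\}$. Thus $\mathbf{M}=(M_1,M_2)\mapsto(K_1,K_2)$ is a bijection from $\EuScript{M}^{1,n-1}_n$ onto the set of ordered pairs of nonempty subsets of $[n]$, and by Remark \ref{obskis} the constraints $|B_{M_1}|=i$ and $|B_{M_2}|=j$ become $|K_1|=i$ and $|K_2|=j$. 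Since distinct plurimatroids produce distinct nonempty cells (a point of $\mathds{F}_{\mathbf{M}}$ has Pl\"ucker coordinates whose vanishing pattern recovers $B_{M_1}$ and $B_{M_2}$), and since—exactly as in Corollary \ref{numcpe}—an empty cell is counted once for each plurimatroid realizing it, each of $C(i,j)$, $P(i,j)$, $E(i,j)$ is enumerated by the plurimatroids with the prescribed sizes together with, respectively, $K_1\cap K_2=\emptyset$, $|K_1\cap K_2|>1$, and $|K_1\cap K_2|=1$; this last translation is precisely the content of Proposition \ref{charthincells}.

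Then I would carry out the counts directly. For $C(i,j)$: choose $K_1$ in $\binom{n}{i}$ ways, and then $K_2$ as a $j$-subset of the $(n-i)$-element set $[n]\setminus K_1$ in $\binom{n-i}{j}$ ways, so $|C(i,j)|=\binom{n}{i}\binom{n-i}{j}$. For $E(i,j)$: choose $K_1$ in $\binom{n}{i}$ ways, pick the unique element of $K_1\cap K_2$ inside $K_1$ in $i$ ways, and pick the remaining $j-1$ elements of $K_2$ from $[n]\setminus K_1$ in $\binom{n-i}{j-1}$ ways, so $|E(i,j)|=i\binom{n}{i}\binom{n-i}{j-1}$. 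Finally, since the conditions $|K_1\cap K_2|=0$, $|K_1\cap K_2|=1$, $|K_1\cap K_2|>1$ partition all $\binom{n}{i}\binom{n}{j}$ ordered pairs of subsets of the given sizes, complementation yields $|P(i,j)|=\binom{n}{i}\binom{n}{j}-|C(i,j)|-|E(i,j)|=\binom{n}{i}\big[\binom{n}{j}-\binom{n-i}{j}-i\binom{n-i}{j-1}\big]$.

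I do not expect a serious obstacle. The only step meriting genuine care is the first one: making the bijective correspondence between plurimatroids and pairs $(K_1,K_2)$ explicit, and in particular being clear that an empty thin Schubert cell is to be counted once per plurimatroid producing it, so that the formulas really enumerate plurimatroids of the stated combinatorial type rather than distinct subvarieties of $\mathds{F}_{1<n-1}(\mathds{C}^n)$.
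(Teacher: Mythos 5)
Your proposal is correct and follows essentially the same route as the paper: identify each class with pairs $(K_1,K_2)$ of subsets of prescribed sizes via Proposition \ref{charthincells} and Remark \ref{obskis}, then count $C(i,j)$ and $E(i,j)$ directly and obtain $P(i,j)$ by subtracting from the total $\binom{n}{i}\binom{n}{j}$. Your explicit justification that rank-$1$ and rank-$(n-1)$ matroids are in bijection with nonempty subsets $K_1$, $K_2$ is a point the paper leaves implicit, and is a welcome addition rather than a divergence.
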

    
    \begin{proof}
    	For each $i\in\{1,2\}$, fix a set $\mathcal{B}_i$ of bases of $M_i$ with $|\mathcal{B}_i|=i$. By Remark \ref{obskis}, we have $|K_i|=|\mathcal{B}_i|$ for $i=1,2$.
    	    \begin{enumerate}
    	    	\item The subsets $K_2$ of $[n]$ disjoint of $K_1$ must be chosen from $n-i$ elements of $[n]\setminus K_1$, in $\displaystyle{n-i\choose j}$ ways. Since there are $\displaystyle{n\choose i}$ choices for $K_1$, the rule product gives $|C(i,j)|=\displaystyle{n\choose i}\displaystyle{n-i\choose j}$.
    	    	\item The total number of subsets $K_2$ of $[n]$ is $\displaystyle{n\choose j}$; among them, those disjoint from $K_1$ are $\displaystyle{n-i\choose j}$. Those with $|K_1\cap K_2|=1$ are obtained choosing the common element in $i$ ways and completing $K_2$ with $j-1$ elements from $[n]\setminus K_1$, in $\displaystyle{n-i\choose j-1}$ ways; hence, $\displaystyle i{n-i\choose j-1}$ in total. Therefore,the number of subsets $K_2$ with $|K_1\cap K_2|\geq 2$ is $\displaystyle{n\choose j}-{n-i\choose j}-i{n-i\choose j-1}$. Multiplying by the $\displaystyle{n\choose i}$ choices of $K_1$, we conclude that $|P(i,j)|=\displaystyle{n\choose i}\left[{n\choose j}-{n-i\choose j}-i{n-i\choose j-1}\right]$.
    	    	\item This case has already been done at the end of the preceding item.
    	    \end{enumerate}    
    \end{proof}
    
    In Appendix \ref{app:codes}, we provide the codes implementing the formulas for the global and restricted counting of thin Schubert cells, for arbitrary $n\geq 3$, which are available for the software \textit{Maple} (see \ref{subapp:maple}) and \textit{Mathematica} (see \ref{subapp:mathematica}).
    
\section{Conclusions}	
    We introduced in \S\ref{subsec2.1} the \emph{plurimatroids}, conceived as Cartesian products of matroids, and used them to define \emph{thin Schubert cells} on flag varieties. From this viewpoint, we extended to these varieties several results previously established by Elizondo et al. for Grassmannians. In \S\ref{sec3} we proved that such thin cells are affine and locally closed subsets of the flag variety and, therefore, they induce a decomposition of it. Pursuing the same combinatorial approach, in \S\ref{sec4} we established the existence of algebro-geometric quotients ---categorical, geometric and universal--- for certain actions of algebraic tori. Furthermore, in \S\ref{subsec4.1} we obtained, among other consequences, a decomposition of the set of scheme-theoretic points of the flag variety that are invariant under the action of a torus. Finally, in \S\ref{sec5} we characterized all nonempty thin Schubert cells in the case of the flag variety $\mathds{F}_{1<n-1}(\mathds{C}^n)$ for $n\geq 3$, and moreover, we obtained explicit formulas for counting all of them. The techniques developed in this work suggest several avenues for future research, including possible extensions to generalized or symplectic flag varieties, and even to Schubert varieties.
    
\appendix
\section{Programming codes}\label{app:codes} 
    \subsection{\textit{Maple} (\textit{v. 2024.2})}\label{subapp:maple}
    \begin{verbatim}
     with(DocumentTools:-Layout):
        	
     thincells := proc(n::posint) 
       local c, e, p, i, j, Zone, CellText, ZoneColor, header, W, S2, S3, S4, S5:
        	
        c := (i, j) -> binomial(n, i)*binomial(n - i, j): 
        e := (i, j) -> i*binomial(n, i)*binomial(n - i, j - 1): 
        p := (i, j) -> binomial(n, i)*binomial(n, j) - c(i, j) - e(i, j):
        	
     Zone := proc(i, j)::integer; 
       option remember; 
          if j <= n - i and (i = 1 or j = 1) then return 1: 
            elif j <= n - i and 2 <= i and 2 <= j then return 2: 
            elif i = 1 and j = n or i = n and j = 1 then return 3: 
            elif j = n - i + 1 and 2 <= i and i <= n - 1 then return 4: 
            elif n - i + 2 <= j then return 5: 
          end if;   
     end proc: 
        	
     CellText := proc(i, j)::string; 
       local z; 
        z := Zone(i, j): 
         if z = 1 then 
             return cat("C = ", c(i, j), ", E = ", e(i, j)): 
           elif z = 2 then 
             return cat("C = ", c(i, j), ", P = ", p(i, j), ", E = ", e(i, j)): 
           elif z = 3 then 
             return cat("E = ", n): 
           elif z = 4 then 
             return cat("P = ", binomial(n, i)*(binomial(n, i - 1) - i), ", E = ", 
                        i*binomial(n, i)): 
           else 
             return cat("P = ", binomial(n, i)*binomial(n, j)): 
         end if; 
     end proc:
        	
     ZoneColor := proc(i, j) 
       local z; 
        z := Zone(i, j): 
         if z = 1 then return "#90c590": 
           elif z = 2 then return "#ffb6c1": 
           elif z = 3 then return "#ff5a42": 
           elif z = 4 then return "#a680ff": 
           else return "#6ea8fe": 
         end if; 
     end proc:
        	
     header := Row(Cell("", fillcolor = "White"), 
                   seq(Cell(j, fillcolor = "LightGray"), j = 1 .. n)):
        	
     W := Table(header, seq(Row(Cell(i, fillcolor = "LightGray"), 
                seq(Cell(Textfield(CellText(i, j), style = Text), 
                fillcolor = ZoneColor(i, j)), j = 1 .. n)), i = 1 .. n), 
                alignment = center, widthmode = pixels, width = 900): 
        	
     S1 := Textfield(cat("# COMPLETE  CELLS  =  ", 3^n - 2^(n + 1) + 1), 
                     style = Text, alignment = centered);
     S2 := Textfield(cat("# PROPER  CELLS  =  ", 4^n - 3^n - n*3^(n - 1)), 
                     style = Text, alignment = centered);
     S3 := Textfield(cat("# NONEMPTY  CELLS  =  ", (2^n - 1)^2 - n*3^(n - 1)), 
                     style = Text, alignment = centered);
     S4 := Textfield(cat("# EMPTY  CELLS  =  ", n*3^(n - 1)), 
                     style = Text, alignment = centered);
     S5 := Textfield(cat("# POSSIBLE  CELLS  =  ", (2^n - 1)^2), 
                     style = Text, alignment = centered);
        	
     DocumentTools:-InsertContent(Worksheet(
      Group(W, Textfield("", style = Text), S1, S2, S3, S4, S5)));
        	
     NULL: 
     end proc:   
    \end{verbatim}
        
\newpage
    \subsection{\textit{Mathematica} (\textit{v. 14.2})}\label{subapp:mathematica}\hphantom{}
    
    \begin{lstlisting}[language=Mathematica,gobble=8]
    	ClearAll[thincells, hexRGB, c, e, p, zone, cellText, zoneColor]; 
    	
    	hexRGB[s_String] := Module[{t = StringReplace[s, "#" -> ""]}, 
    	With[{r = FromDigits[StringTake[t, 2], 16], 
    	      g = FromDigits[StringTake[t, {3, 4}], 16],  
    	      b = FromDigits[StringTake[t, {5, 6}], 16]}, 
    	RGBColor[r/255., g/255., b/255.]]];
    	
    	c[n_, i_, j_] := Binomial[n, i] Binomial[n - i, j]; 
    	e[n_, i_, j_] := i Binomial[n, i] Binomial[n - i, j - 1]; 
    	p[n_, i_, j_] := Binomial[n, i] Binomial[n, j] - c[n, i, j] 
    	                 - e[n, i, j]; 
    	
    	zone[n_, i_, j_] /; (1 <= i <= n && 1 <= j <= n) := 
    	  Which[j <= n - i && (i == 1 || j == 1), 1, j <= n - i && 
    	        i >= 2 && j >= 2, 2, (i == 1 && j == n) || (i == n 
    	        && j == 1), 3, j == n - i + 1 && 2 <= i <= n - 1, 4, 
    	        j >= n - i + 2, 5];
    	
    	cellText[n_, i_, j_] := Module[{z = zone[n, i, j], parts = {}}, 
    	  If[MemberQ[{1, 2}, z], 
    	     AppendTo[parts, Row[{"C = ", c[n, i, j]}]]]; 
    	  If[MemberQ[{2, 4, 5}, z], 
    	     AppendTo[parts, 
    	  Row[{"P = ", Which[z == 2, p[n, i, j], z == 4, 
    	        Binomial[n, i] Binomial[n, j] - e[n, i, j], True, 
    	        Binomial[n, i] Binomial[n, j]]}]];]; 
    	If[MemberQ[{1, 2, 3, 4}, z], 
    	   AppendTo[parts, Row[{"E = ", e[n, i, j]}]]]; 
    	   Row@Riffle[parts, ", "]];
    	
    	zoneColor[z_] := Switch[z, 1, hexRGB["#90c590"], 2, 
    	        hexRGB["#ffb6c1"], 3, hexRGB["#ff5a42"], 4, 
    	        hexRGB["#a680ff"], 5, hexRGB["#6ea8fe"],_, White];
    	
    	thincells[n_Integer] := Module[{jBand = LightGray, 
    	    hdr = LightBlue, top, second, rows, i, j},
    	top = Join[{Item["", Background -> White]}, {Item["|B_M2|", 
    	    Alignment -> Center, Background -> jBand]}, 
    	    Table[SpanFromLeft, {n - 1}]]; 
    	second = Join[{Item["|B_M1|", Alignment -> Center, 
    	       Background -> jBand]}, Table[Item[j, Background -> hdr], 
    	       {j, 1, n}]]; 
    	rows = Table[Join[{Item[i, Background -> hdr]}, 
    	         Table[Item[cellText[n, i, j], Background -> 
    	         zoneColor[zone[n, i, j]]], {j, 1, n}]], {i, 1, n}];        
    	
    	table = Grid[Join[{top, second}, rows], Alignment -> Center, 
    	     Spacings -> {1.2, 0.9}, ItemSize -> Automatic, Frame -> 
    	     All, BaseStyle -> {FontFamily -> "Helvetica", 12}];
    	
    	stats = Column[{Row[{Style["# Complete cells = ", Bold], 
    			TraditionalForm[3^n - 2^(n + 1) + 1]}], 
    		Row[{Style["# Proper cells = ", Bold], 
    			TraditionalForm[4^n - 3^n - n*3^(n - 1)]}],
    		Row[{Style["# Nonempty cells = ", Bold], 
    			TraditionalForm[(2^n - 1)^2 - n*3^(n - 1)]}], 
    		Row[{Style["# Empty cells = ", Bold], 
    			TraditionalForm[n*3^(n - 1)]}],
    		Row[{Style["# Possible cells = ", Bold], 
    			TraditionalForm[(2^n - 1)^2]}]}, Spacings -> 0.6];
    	
    	Column[{table, Spacer[8], stats}]];    
    \end{lstlisting}
                
    \bibliographystyle{alpha}
    \bibliography{References} 
\end{document}